\newtheorem{theorem}{Theorem}
\newtheorem{criteria}{Criterion}
\newtheorem{corollary}{Corollary}
\theoremstyle{definition}
\newtheorem{definition}{Definition}
\newtheorem{remark}{Remark}
\newcommand{\R}{{\mathbb R}}
\newcommand{\gT}{{\mathfrak T}}
\newcommand{\const}{{\rm const}}
\newcommand{\essup}{{\rm essup}}
\newcommand{\sh}{{\rm sinh}}
\newcommand{\ch}{{\rm cosh}}
\begin{document}

\title[Disconjugacy of second order linear differential equation]{Disconjugacy of a second order linear differential equation and periodic solutions}

\author{V.~Ya.~Derr}

\begin{abstract}
The present paper is devoted to a new criterion for disconjugacy of a second order linear differential equation. Unlike most of the classical sufficient conditions for disconjugacy, our criterion does not involve assumptions on the smallness of the coefficients of the equation. We compare our criterion with several known criteria for disconjugacy, for which we provide detailed proofs, and discuss the 
applications of the property of disconjugacy to the 
problem of factorization of linear ordinary differential operators, and to the proof of the generalized Rolle's theorem. 
\end{abstract}

\email{vandv@udm.net}

\subjclass[2000]{Primary 34B05,
Secondary 34B60}

\keywords{Disconjugacy, boundary value problems, linear differential equations}

\maketitle

\section{Introduction}

A linear differential equation 
\begin{equation}
\label{eq1}
(Lx)(t):= x''+p(t)x'+q(t)x=0,\quad I:=(a,b) \subset \mathbb R,
\end{equation}
having locally integrable coefficients $p$, $q:I \mapsto \mathbb R$,
is called {\it disconjugate} on an interval $J \subset I$ (open or closed) if any of its solutions $x \not\equiv 0$ can not have two zeros in $J$.
The property of disconjugacy became a subject of intense study in early 1950s (see, e.g.,  \cite{pol24} --- \cite{muld78}), in particular, due to the exceptional role that it plays
in the qualitative theory of second order linear differential equations. 
Traditionally (see, e.g., the literature cited above), 
most of the sufficient conditions for disconjugacy, formulated for 
differential equation (\ref{eq1}) written in the form $x''+Q(t)x=0$ (or $(P(t)x')'+Q(t)x=0$), include some kind of `smallness' assumption on the coefficient $Q$. 
In the present paper 
(which also may serve as a brief introduction to the theory of 
disconjugacy for second order linear differential equations) 
we obtain a new sufficient condition for disconjugacy for a 
differential equation of the general form (\ref{eq1}), that does not involve any assumptions on the smallness of the coefficients.

The paper is organized as follows. In Sections 2-4 we formulate and prove several 
known criteria for disconjugacy, and discuss the 
applications of the property of disconjugacy to the 
problem of factorization of linear ordinary differential operators, and to the proof of the generalized Rolle's theorem.
Section 5 is devoted to our new criterion for disconjugacy.

\section{Prelimiaries}

\subsection{Cauchy's and Green's functions}

\begin{definition}
A function $C:I\times [\alpha,t]\to \R$ is called \textit{Cauchy's function} of equation (\ref{eq1}) if
$$
(LC)(\cdot ,s)=0\quad \text{for almost all}\;t\geqslant s, \quad C(s,s)=0,\quad \frac{\partial C(s,s)}{\partial t}=1\quad (s\in I).
$$
\end{definition}

We note that Cauchy's function always exists and is unique.

\begin{definition}
A function $G:[a,b]^2\to \R$ is called \textit{Green's function} of boundary value problem
\begin{equation}\label{eq4}
(Lx)(t)=f(t)\quad (t\in I),\quad x(a)=0,\;x(b)=0\quad (a,b\in I),
\end{equation}
provided that it satisfies the following conditions: 1) $G$ is continuous on $[a,b]^2;$

2) $\frac{\partial G(\cdot,s)}{\partial t}$ is absolutely continuous in the triangles $a\leqslant s<t\leqslant b$ and $a\leqslant t<s\leqslant b,$ and
$$\frac{\partial G(s+,s)}{\partial t}-\frac{\partial G(s-,s)}{\partial t}=1;$$

3)$(LG)(\cdot ,s)=0\;\text{при}\;t\ne s;$

4) $G(a,s)=0,\;G(b,s)=0.$
\end{definition}

If the boundary value problem (\ref{eq4}) has the unique solution $x$, then it has the unique Green's function, and $x$ admits presentation
$$x(t)=\displaystyle{\int_a^b} G(t,s)f(s)\,ds.$$ Also, one has the following identity
$$
G(t,s)= 
\begin {cases} 
-\frac {C(b,t)C(s,a)}{C(b,a)},&\text{если}\; a \leqslant s < t ,\\ 
-\frac {C(t,a)C(b,s)}{C(b,a)},&\text{если}\; t\leqslant s \leqslant b,
\end {cases}
$$
which implies that if $C(t,s)>0,$ for $a\leqslant s<t\leqslant b,$ then $G(t,s)<0$ for $(t,s)\in (a,b)^2.$


\subsection{Disconjugacy and Sturm theorems}


We will need the following results due to Sturm: Separation of zeros theorem and Comparison theorem (see, e.g., \cite[p. 252]{stepvv},\,\cite[p. 81]{hart70}).

\begin{theorem}[Separation of zeros]
\label{thsep} 
Let $a$, $b\in I$, suppose that $x$ is a solution of equation {\rm (\ref{eq1})} such that $x(a)=x(b)=0,$   $x(t)\ne 0$ for any $t\in (a,b).$ Then any other solution of (\ref{eq1}), linearly independent with $x$, has only one zero in  $(a,b).$
\end{theorem}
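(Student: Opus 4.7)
The plan is to exploit the Wronskian of the two solutions. Given another solution $y$ of (\ref{eq1}) linearly independent with $x$, set $W(t) := x(t)y'(t) - x'(t)y(t)$. Since $W$ satisfies the first-order equation $W' = -p(t)W$, Abel's identity yields $W(t) = W(t_0)\exp\!\left(-\int_{t_0}^t p(s)\,ds\right)$, which is nowhere zero on $I$ by linear independence; in particular, $W$ has constant sign throughout $I$.

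Next I would pin down the boundary behavior. By uniqueness for the initial value problem, $x'(a) \neq 0$ and $x'(b) \neq 0$, for otherwise $x \equiv 0$. Since $x$ does not vanish on $(a,b)$, we may assume $x > 0$ on $(a,b)$, which forces $x'(a) > 0$ and $x'(b) < 0$. A similar argument, using $W(a) = x(a)y'(a) - x'(a)y(a) = -x'(a)y(a)$, shows $y(a) \neq 0$, and likewise $y(b) \neq 0$ from $W(b) = -x'(b)y(b)$.

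For existence of a zero of $y$ in $(a,b)$, I would evaluate the Wronskian at the endpoints: $W(a) = -x'(a)y(a)$ and $W(b) = -x'(b)y(b)$. Because $W(a)$ and $W(b)$ share the same sign while $x'(a) > 0$ and $x'(b) < 0$, the numbers $y(a)$ and $y(b)$ must have opposite signs, so by continuity $y$ vanishes at some $c \in (a,b)$. For uniqueness, I would argue by contradiction: if $y$ had two zeros $c_1 < c_2$ in $(a,b)$, chosen consecutive (they are isolated, again by uniqueness of the Cauchy problem), running the previous argument with the roles of $x$ and $y$ interchanged would force $x$ to vanish at some point of $(c_1, c_2) \subset (a,b)$, contradicting the hypothesis.

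The main point to get right is the bookkeeping on signs: one must argue that single-signedness of $x$ on $(a,b)$ together with the boundary vanishing forces $x'(a)$ and $x'(b)$ to have strictly opposite signs, and then combine this with the constant sign of $W$ from Abel's formula. Once those sign relations are in place, existence follows from the intermediate value theorem and uniqueness from a symmetric repetition of the same argument, so no deeper obstacle appears.
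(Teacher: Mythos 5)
Your proof is correct, but it takes a different route from the paper's. You argue directly: Abel's identity gives the Wronskian $W=xy'-x'y$ a constant nonzero sign on $[a,b]$; the sign conditions $x'(a)>0$, $x'(b)<0$ (forced by $x>0$ on $(a,b)$, $x(a)=x(b)=0$, and uniqueness for the Cauchy problem) turn the endpoint values $W(a)=-x'(a)y(a)$ and $W(b)=-x'(b)y(b)$ into the conclusion that $y(a)$ and $y(b)$ have opposite signs, whence the intermediate value theorem produces a zero of $y$ in $(a,b)$; uniqueness then follows by swapping the roles of $x$ and $y$ between two consecutive zeros of $y$. The paper instead argues by contradiction: it assumes $y$ never vanishes on $(a,b)$ (hence on $[a,b]$), observes that $(x/y)'=-W/y^{2}$ has constant sign, and integrates over $[a,b]$ to get the contradiction $0=\frac{x(b)}{y(b)}-\frac{x(a)}{y(a)}=\int_a^b(x/y)'\,dt\neq 0$; its uniqueness step is the same symmetric swap as yours. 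Your version requires the small extra bookkeeping about the signs of $x'(a)$ and $x'(b)$ and the nonvanishing of $y$ at the endpoints, all of which you handle correctly, and in exchange it is constructive rather than by contradiction and locates the zero via the intermediate value theorem; the paper's version avoids discussing $x'$ at the endpoints entirely by working with the quotient $x/y$. Both arguments are valid under the paper's standing assumption of locally integrable coefficients, since solutions there have absolutely continuous first derivatives and the Cauchy problem is uniquely solvable.
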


\begin{proof}
Suppose that $y$ is a solution of equation (\ref{eq1}) linearly independent with $x$ and such that $y(t)\ne 0$ on $(a,b).$ Since $y(a)\ne 0,\;y(b)\ne 0$ (due to linear independence of $x$ and $y$),
$y(t)\ne 0$ on $[a,b].$ Define $h(t):= -W(t)/y(t),$ where Wronskian $W$ of $\{x,y\}$ is continuous and nowhere zero on $[a,b]$. Then $h(t)\ne 0$ on $[a,b].$  
Without loss of generality $h(t)>0$ on $[a,b].$ Since $h=(x/y)',$ we have
$\int_a^bh(t)\,dt>0$ and, at the same time, $$\int_a^bh(t)\,dt=\dfrac{x(b)}{y(b)}-\dfrac{x(a)}{y(a)}=0.$$ The latter implies that $y(t^*)=0$ at some $t^*\in (a,b).$
If $y(t_*)=0$ at some $t_*\ne t^*,$ then, as we have already proved, $x$ would have a zero in $(a,b),$ which contradicts to our assumptions.
\end{proof}

Let $a\in I,$ $x$ be a solution of equation (\ref{eq1}) such that $x(a)=0$. A point $\rho_+(a)>a$ $\bigl(\rho_-(a)<a\bigr)$ is called {\it right} ({\it left}) 
{\it conjugate point of $a$}  if
$$
x(\rho_{\pm}(a))=0,\; x(t)\ne 0\quad \text {in}\quad (a,\rho_+(a))\quad  \bigl(\text{in}\;(\rho_-(a),a)\bigr).
$$
If $x(t)\ne 0$ on $(a,\beta)$ $\bigl(\text{respectively}, (\alpha , a)\bigr),$ we define $\rho_+(a)=\beta\quad \bigl(\rho_-(a)=\alpha\bigr)$

\begin{corollary}\label{incrrho}
{\it Suppose that $\rho_+(t)\ne\beta,\;\rho_-(t)\ne\alpha$ for all $t\in\ I.$ Then functions  $\rho_{\pm}$\, are strictly increasing. 
Furthermore, $\rho_+\bigl(\rho_-(t)\bigl)=\rho_-\bigl(\rho_+(t)\bigl)=t\quad (t\in I),$ i.e., the functions $\rho_{\pm}$ are the inverses of each other and map continuously any interval in $I$ to an interval in $I.$}
\end{corollary}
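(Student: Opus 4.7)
The plan is to prove the three assertions in order: strict monotonicity of $\rho_\pm$, the inversion identities, and continuity. All three rest on Theorem~\ref{thsep} together with the elementary observation that any two solutions of (\ref{eq1}) vanishing at a common point of $I$ are proportional. To establish that $\rho_+$ is strictly increasing I would fix $t_1<t_2$ in $I$ and argue by contradiction, assuming $\rho_+(t_2)\leqslant \rho_+(t_1)$. Choose solutions $x_1,x_2$ of (\ref{eq1}) with $x_i(t_i)=0$. Since $\rho_+(t_2)>t_2$, the assumed inequality places $t_2$ in $(t_1,\rho_+(t_1))$; as $x_1\ne 0$ on this interval while $x_2(t_2)=0$, the two solutions are linearly independent. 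Theorem~\ref{thsep} applied to $x_1$ on $[t_1,\rho_+(t_1)]$ then forces $x_2$ to have exactly one zero in $(t_1,\rho_+(t_1))$. If $\rho_+(t_2)<\rho_+(t_1)$, both $t_2$ and $\rho_+(t_2)$ would be zeros of $x_2$ in that interval, a contradiction; if $\rho_+(t_2)=\rho_+(t_1)$, then $x_1$ and $x_2$ share the zero $\rho_+(t_1)$ and are therefore proportional, contradicting the linear independence just obtained. Hence $\rho_+(t_1)<\rho_+(t_2)$, and the symmetric argument yields strict monotonicity of $\rho_-$.

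Next I would prove $\rho_+(\rho_-(t))=t$ for every $t\in I$. Setting $s:=\rho_-(t)$, the definition provides a solution $y$ with $y(s)=y(t)=0$ and $y\ne 0$ on $(s,t)$. Every solution of (\ref{eq1}) vanishing at $s$ is a scalar multiple of $y$ and hence shares its zero set; consequently the first zero to the right of $s$ is precisely $t$, i.e., $\rho_+(s)=t$. The identity $\rho_-(\rho_+(t))=t$ is entirely symmetric.

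It follows that $\rho_+$ and $\rho_-$ are mutually inverse, strictly increasing bijections of $I$ onto $I$. Since a strictly monotone bijection between intervals is automatically continuous and maps subintervals to subintervals, the remaining assertion is immediate. The only delicate point in the whole argument is the borderline case $\rho_+(t_1)=\rho_+(t_2)$ in the monotonicity step, where one must exploit the proportionality of solutions vanishing at a common point --- a fact that complements Theorem~\ref{thsep} rather than being contained in it.
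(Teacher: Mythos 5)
Your proof is correct and follows essentially the same route as the paper: assume $\rho_+(t_2)\leqslant\rho_+(t_1)$ for $t_1<t_2$, rule out strict inequality by Theorem~\ref{thsep} (two zeros of one solution between consecutive zeros of another) and rule out equality via the fact that two solutions sharing a zero are proportional, then deduce the inversion identities and continuity from strict monotonicity. You merely spell out in more detail the equality case and the second statement, which the paper compresses into a reference to ``the definition of a conjugate point'' and to properties of strictly monotone functions.
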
 


\begin{proof}
Let $t_2>t_1,\;x(t_1)=y(t_2)=0$\; ($x$ and $y$\, are solutions of {\rm (\ref{eq1})}). Suppose that $\rho_+(t_2)\leqslant \rho_+(t_1).$  The equality here, meaning that
$x(\rho_+(t_2))=y(\rho_+(t_1))=0$, contradicts to the definition of a conjugate point. Meanwhile, the strict inequality contradicts to Theorem \ref{thsep} (since $y$ would have two zeros between two consecutive zeros of $x.$) Consequently, $\rho_+(t_2)> \rho_+(t_1).$  

The proof for function $ \rho_-$ is similar. The proof of the second statement follows from the definition of conjugate points and properties of strictly monotone functions.
\end{proof}

\begin{remark} Note that if $\rho_+(a)=\beta$ or $\rho_-(a)=\alpha$ for a certain $a,$  then functions $\rho_{\pm}$ might not be monotone on interval $I,$ but only on $(\alpha,b)$ (or on $(a,\beta)$),
where $$b=\inf\{t:\rho_+(t)=\beta\}\;\Bigl(a=\sup\{t:\rho_-(t)=\alpha\}\Bigr).$$
For instance, equation
\begin{equation}
\label{eq3a}
x''-\frac{A\,\sh\,t}{A\,\ch\,t-1}\,x'+\frac{1}{A\,\ch\,t-1}\,x=0\quad (A\geqslant 2, \;I=(-\infty,+\infty))
\end{equation}
has a solution
$$
x(t)=\frac{A-\ch\,a}{A\ch\,a-1}\,\sh\,t+\frac{\sh\,a}{A\ch\,a-1}(\ch\,t-A)
$$
which satisfies $x(a)=0,\;x'(a)=1.$ Therefore, we obtain
$$
\rho _{+}(t)=\left\{
\begin{array}{lcr}
\ln\,\frac{A-e^t}{1-Ae^t}\quad \text{if}\quad -\infty <t<\ln\,\frac{1}{A}, \\
+\infty \quad \text{if}\quad t\geqslant \ln\,\frac{1}{A}.
\end{array}
\right.
$$ 
Analogously, 
$$
\rho _{-}(t)=\left\{
\begin{array}{lcr}
\ln\,\frac{A-e^t}{1-Ae^t}\quad \text{if}\quad \ln\,A<b<+\infty, \\
-\infty \quad \text{if}\quad t\leqslant \ln\,A.
\end{array}
\right.
$$
The same situation holds for equation
\begin{equation}\label{eq3b}
x''-\frac{2(2t-b)}{t^2+(t-b)^2}\,x'+\frac{4}{t^2+(t-b)^2}\,x=0\qquad (b>0);
\end{equation}
here
$
\rho _{+}(t)=\left\{
\begin{array}{lcr}
\frac{b(t-b)}{2t-b}\quad \text{if}\quad t<\frac{1}{2}b, \\ 
+\infty \quad \text{if}\quad t\geqslant \frac{1}{2}b;
\end{array}
\right.
$
$
\rho _{-}(t)=\left\{
\begin{array}{lcr}
\frac{b(t-b)}{2t-b}\quad \text{if}\quad t>\frac{1}{2}b, \\ 
-\infty \quad \text{if}\quad \leqslant \frac{1}{2}b.
\end{array}
\right.
$
\end{remark}

\begin{definition}
We say that a differential equation (\ref{eq1}) is {\it disconjugate} on an open interval $J\subset I$ if any of its non-trivial solutions has at most one zero in $J$.
If the latter is the case, we say that $J$ is an interval of disconjugacy of equation  (\ref{eq1}).
\end{definition}

Thus, $J$ is an interval of disconjugacy of equation
 (\ref{eq1}) if and only if $\rho_{\pm}(a)\notin J$ for any $a\in J.$ 

It follows from the above representation of functions $\rho_{\pm}$ that the intervals for disconjugacy of equation (\ref{eq3a}) are
$$
\left[a,\ln\dfrac{A-e^a}{1-Ae^a}\right),\;\text{if}\;a<\ln\frac{1}{A};\quad [a,+\infty),\;\text{if}\;a\geqslant \ln\frac{1}{A},
$$
while for equation the intervals for disconjugacy of equation (\ref{eq3b}) are
$$
\left[a,\dfrac{b(a-b)}{2a-b}\right),\;\text{if}\;a<\frac{1}{2}b;\quad [a,+\infty),\;\text{if}\;a\geqslant \frac{1}{2}b.
$$

\begin{definition}
We denote by $\gT(J)$ the class of linear differential operators $L$ such that the corresponding homogeneous equation $Lx=0$ is disconjugate on interval $J\subset I.$  
\end{definition}

Let $(a,b)\subset I,$ suppose that $a_n\to a+,\;b_n\to b-$ ($a_n\to -\infty,\;b_n\to +\infty$ in the case $a=\alpha=-\infty,\;b=\beta =+\infty$). Then
\begin{equation}\label{eqdop}
\gT\bigl((a,b)\bigr)=\bigcap\limits_{n=1}^{\infty}\gT\left(\bigl[a_n,\,b_n\bigr]\right)=\bigcap\limits_{n=1}^{\infty}\gT\left(\bigl(a_n,\,b_n\bigr)\right).
\end{equation}

As follows from the definition of the property of disconjugacy and the definition of Cauchy's function, if 
equation (\ref{eq1}) is disconjugate on interval $J=[a,b)\subset I,$ then $C(t,s)>0$ in the triangle
$a\leqslant s<t<b.$ The disconjugacy of equation (\ref{eq1}) on an interval $[a,b]$ implies the existence of the unique solution of problem (\ref{eq4}), so the Green's function of this problem satisfies $G(t,s)<0$ on $(a,b)^2.$

\begin{theorem}[Comparison theorem]
\label{thcomp} 
Let
$$
L_i\,y:= y''+p(t)\,y'+q_i(t)\,y=0,\quad i=1,2\quad\text{и}\quad q_1(t)\leqslant  q_2(t)\quad (t\in I).
$$ 
If $L_2\in \gT(J),$ then
$L_1\in \gT(J)$.
\end{theorem}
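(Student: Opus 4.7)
The plan is to reduce to the self-adjoint form by the integrating factor $\mu(t):=\exp\!\bigl(\int_{t_0}^{t}p(s)\,ds\bigr)>0$. Multiplying $L_i y=0$ by $\mu$ gives the equivalent equations $(\mu y')' + \mu q_i\, y = 0$, and since $\mu>0$ the hypothesis $q_1\le q_2$ becomes $\mu q_1 \le \mu q_2$, so without loss of generality we may work with these self-adjoint forms while preserving the classes $\gT(J)$. I will then argue by contradiction.

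Suppose $L_1\notin\gT(J)$. Then some non-trivial solution $x_1$ of $L_1 x=0$ has two zeros $a_1<b_1$ in $J$, which we may take to be consecutive, so $x_1>0$ on $(a_1,b_1)$; by the uniqueness theorem for Cauchy problems, $x_1'(a_1)>0$ and $x_1'(b_1)<0$. Let $x_2$ be the solution of $L_2 y=0$ determined by $x_2(a_1)=0$, $x_2'(a_1)=1$. Since $L_2\in\gT(J)$ and $a_1\in J$, the solution $x_2$ has no further zero in $J$; combined with $x_2'(a_1)=1>0$, continuity yields $x_2(t)>0$ for all $t\in J$ with $t>a_1$. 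In particular $x_2(b_1)>0$.

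The core computation is the Picone/Wronskian identity for the pair $(x_1,x_2)$. Multiplying the self-adjoint form of $L_1 x_1=0$ by $x_2$, the self-adjoint form of $L_2 x_2=0$ by $x_1$, subtracting, and noting the telescoping
\[
x_2(\mu x_1')' - x_1(\mu x_2')' = \bigl(\mu(x_2 x_1' - x_1 x_2')\bigr)',
\]
one obtains
\[
\bigl(\mu(x_2 x_1' - x_1 x_2')\bigr)' = \mu\,(q_2-q_1)\,x_1 x_2.
\]
Integrating from $a_1$ to $b_1$ and using $x_1(a_1)=x_1(b_1)=0$, $x_2(a_1)=0$, the left-hand side collapses to $\mu(b_1)\,x_2(b_1)\,x_1'(b_1)$, which is strictly negative by the sign information above. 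The right-hand side, however, is nonnegative since $\mu>0$, $q_2-q_1\ge 0$, and $x_1,x_2>0$ on $(a_1,b_1)$. This contradiction proves $L_1\in\gT(J)$.

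The only subtle point is the sign-bookkeeping at the endpoints and the observation that the disconjugacy of $L_2$ forces $x_2>0$ throughout $(a_1,b_1]\cap J$; once that is in place, the Picone identity does all the work. No delicate case distinction for open versus closed $J$ is needed, because the contradiction is produced from the two zeros of $x_1$, both of which lie in $J$ by assumption.
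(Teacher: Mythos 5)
Your proof is correct, but it follows a genuinely different route from the paper's. You run the classical Sturm comparison argument: pass to the self-adjoint form with the integrating factor $\mu=\exp\bigl(\int p\bigr)$, take two consecutive zeros $a_1<b_1$ of a solution of $L_1x=0$, and integrate the Picone/Wronskian identity $\bigl(\mu(x_2x_1'-x_1x_2')\bigr)'=\mu(q_2-q_1)x_1x_2$ over $[a_1,b_1]$ to get a sign contradiction at the endpoint $b_1$. The paper instead sets $h:=y-x$, where $y$ solves $L_2y=0$ with $y(a)=0$, $y'(a)=x'(a)$, observes that $h$ solves $(L_2h)(t)=(q_1(t)-q_2(t))x(t)$ with $h(a)=h'(a)=0$, and represents $h$ by the variation-of-constants formula $h(t)=\int_a^tC_2(t,s)(q_1(s)-q_2(s))x(s)\,ds\leqslant 0$, using the positivity of the Cauchy function $C_2$ of the disconjugate equation $L_2y=0$; this contradicts $h(b)=y(b)>0$. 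The trade-off: your argument is self-contained and needs only the elementary sign bookkeeping at the two endpoints (plus the standard facts that zeros of a nontrivial solution are simple and hence isolated, which you correctly invoke), whereas the paper's argument leans on the previously established implication ``disconjugacy $\Rightarrow$ $C(t,s)>0$ for $s<t$,'' which is the structural fact the paper wants to showcase and reuse (e.g.\ for the Green's function sign and in the proof of Theorem \ref{teor-1}). Both handle the low regularity (locally integrable coefficients) correctly --- in your case because $\mu$ and $\mu x_i'$ are absolutely continuous, so the identity integrates by the fundamental theorem for absolutely continuous functions. One cosmetic remark: your reduction to self-adjoint form is not strictly a ``without loss of generality'' replacement of the equation (the classes $\gT(J)$ are unchanged because the solutions are unchanged, not because the operators coincide), but nothing in the argument depends on that phrasing.
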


\begin{proof}
Let $L_2\in \gT(J).$ Suppose that $L_1\notin \gT(J).$  Then there exist a solution $x$ of equation $L_1x=0$, and points $a,b\in J,\;a<b,\;x(a)=x(b)=0,\;x(t)>0\;(t\in (a,b)).$
Let us define the solution $y$ of equation $L_2y=0$ by initial values $y(a)=0,\;y'(a)=x'(a)\,(>0).$ Since $L_2\in \gT(J),$ then $y(b)>0.$ Let $h:= y-x.$ 
Then $0=L_2y-L_1x=L_2h-(q_1-q_2)x, \;h(a)=0,h'(a)=0.$ Also, note that $h(b)>0.$ Thus, function $h$ is a solution to problem
$$
(L_2h)(t)=(q_1(t)-q_2(t))x(t),\quad h(a)=h'(a)=0,
$$
i.e.,
$$
h(t)=\int_a^tC_2(t,s)\bigl(q_1(s)-q_2(s)\bigr)x(s)\,ds\leqslant 0, 
$$ 
since Cauchy's functions $C_2(t,s)$ of equation $L_2y=0$ is positive for all $a\leqslant s<t\leqslant b,\; x(s)>0,$\\ $\bigl(q_1(s)-q_2(s)\bigr)\leqslant 0$ for all $a<s \leqslant t\leqslant b.$
The inequality $h(b)\leqslant 0$ contradicts to the inequality $h(b)>0$ obtained previously, so $L_1\in \gT(J).$
\end{proof}

\subsection{Properties of class $\gT(J)$}

\begin{theorem}\label{th21} 
Let $a$, $b \in I.$ Then 
$
\gT\bigl((a,b)\bigr) = \gT\bigl((a,b]\bigr) = \gT\bigl([a, b)\bigr).
$
\end{theorem}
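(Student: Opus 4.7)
The inclusions $\gT([a,b)) \subseteq \gT((a,b))$ and $\gT((a,b]) \subseteq \gT((a,b))$ are immediate from the definition, since any two zeros of a solution lying in $(a,b)$ automatically lie in either half-closed extension. The content of the theorem is thus the reverse inclusions, and by the symmetric role of the two endpoints it suffices to prove $\gT((a,b)) \subseteq \gT([a,b))$; the companion inclusion $\gT((a,b)) \subseteq \gT((a,b])$ is handled by the mirror-image argument, perturbing a zero at $b$ to $b - \epsilon$.

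I would argue by contradiction. Suppose $L \in \gT((a,b))$ but $L \notin \gT([a,b))$. Then some non-trivial solution $x$ of $Lx=0$ has two distinct zeros in $[a,b)$. Both cannot lie in $(a,b)$ (that would already contradict $L \in \gT((a,b))$), so one must be the endpoint $a$. Writing the other as $c \in (a,b)$ and taking $c$ minimal, we may normalise so that $x > 0$ on $(a,c)$. Since a non-trivial solution of a second order linear ODE has only simple zeros, $x'(a) > 0$ and $x'(c) < 0$.

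The key step is a perturbation of the zero at $a$: introduce the solution $y_\epsilon$ with Cauchy data $y_\epsilon(a+\epsilon) = 0$, $y_\epsilon'(a+\epsilon) = 1$, for small $\epsilon > 0$. By continuous dependence of solutions of \eqref{eq1} on their initial data (equivalently, by continuity of Cauchy's function $C(t,s)$ in $(t,s)$), $y_\epsilon$ converges uniformly on compact subsets of $I$, as $\epsilon \to 0^+$, to the solution $y_0$ with $y_0(a) = 0$, $y_0'(a) = 1$. The solutions $y_0$ and $x$ both vanish at $a$, so they are proportional; in fact $y_0 = x/x'(a)$, which yields $y_0(c)=0$ and $y_0'(c) = x'(c)/x'(a) < 0$. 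Pick $\delta > 0$ so small that $[c - \delta, c + \delta] \subset (a,b)$ and $y_0(c - \delta) > 0 > y_0(c + \delta)$. For all $\epsilon$ sufficiently small the same strict inequalities persist for $y_\epsilon$, forcing a zero $\tau_\epsilon \in (c - \delta, c + \delta) \subset (a,b)$. Together with the zero at $a + \epsilon \in (a,b)$, the solution $y_\epsilon$ then has two zeros in $(a,b)$, contradicting $L \in \gT((a,b))$.

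I do not anticipate a real obstacle; the only mildly delicate point is the continuous-dependence step, which has to be applied with a moving initial point $a+\epsilon$ rather than a fixed one. This is standard, but worth stating explicitly so that the sign-change of $y_\epsilon$ near $c$ is guaranteed uniformly for small $\epsilon$. Once that is in place, the proof is a short comparison of signs and an application of the intermediate value theorem.
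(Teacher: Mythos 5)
Your proof is correct, but it takes a genuinely different route from the paper's. The paper stays inside the Sturm machinery it has just developed: from the offending solution $v$ with $v(a)=v(c)=0$ it picks $c_1\in(c,b)$, uses the monotonicity of the conjugate-point functions (Corollary \ref{incrrho}) to place $a_1=\rho_-(c_1)$ in $(a,c)$, and then, with an auxiliary solution $y$ normalized by $y(a_1)=y(c_1)=1$ and the separation theorem (Theorem \ref{thsep}), exhibits a solution with two zeros strictly inside $(a,b)$. You reach the same contradiction by perturbing the offending solution itself: you slide its initial zero from $a$ to $a+\epsilon$ and use continuous dependence of solutions on initial data (joint continuity of the Cauchy function $C(t,s)$ in $(t,s)$), together with the simplicity of the zero at $c$, to retain a sign change --- hence a zero --- near $c$, giving two zeros of $y_\epsilon$ in $(a,b)$. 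Both arguments are sound, and your reduction to the single inclusion $\gT\bigl((a,b)\bigr)\subset\gT\bigl([a,b)\bigr)$ by endpoint symmetry matches the paper's. What your version buys is independence from conjugate points and the separation theorem; what it costs is the continuous-dependence step with a moving initial time, which is standard for locally integrable coefficients but, as you correctly note, must be stated explicitly so that the strict inequalities at $c\pm\delta$ persist uniformly for small $\epsilon$. The paper's version avoids any limiting argument but leans on Corollary \ref{incrrho}, whose hypotheses ($\rho_-(c_1)$ actually existing in $I$) it applies somewhat informally.
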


\begin{proof} 
It suffices to check inclusion $\gT\bigl((a,b)\bigr) \subset \gT\bigl([a,b)\bigr),$ the rest follows by symmetry.

Let
$L\in \gT\bigl((a,b)\bigr).$ Suppose that $L\notin \gT\bigl([a,b)\bigr).$ Then there exist a solution $v$ of equation (\ref{eq1}) such that $v(a)=v(c)=0\; (a<c<b)$, $v(t)>0$ in $(a,c)$ (note that $v$ 
has at least two zeros in $[a,b),$ and at most one zero in $(a,b)$). By definition, $c=\rho_+(a)\quad \bigl((a=\rho_-(c)\bigr).$ Let us choose  $c_1 \in (c,b)$ so that $v(t)<0$ in $(c,c_1)$. We put $a_1=\rho _{-}(c_1).$ According to Corollary \ref{incrrho} $a<a_1<c.$ 
Let $x$ be the corresponding solution of equation (\ref{eq1}), i.e., $x(c_1)=x(a_1)=0,\quad x(t)>0$ in $(a_1,c_1).$
Since $L\in \gT\bigl([a_1,c_1]\bigr),$ there exists the unique solution $y$ of equation (\ref{eq1}) that satisfies $y(a_1)=y(c_1)=1.$ We have $y(t)>0$ on $[a_1,c_1]$  
(as a continuous function taking the same values at the endpoints of the interval, function $y$ can have only even number of zeros, hence, due to disconjugacy, none of them). We note that $y$ is linearly independent with $v$ and with $x.$ According to Theorem \ref{thsep} $y$ has exactly one zero in both intervals $(a,a_1)$ and $(c,c_1),$ that is, $y$ has two zeros in $(a,b).$ The latter contradicts to disconjugacy of equation (\ref{eq1}) on $(a,b)$.
\end{proof}

\begin{theorem}\label{thnefkrit}
$1.$ If there exists a solution of equation (\ref{eq1}) that is nowhere zero on $[a,b]\subset  I \\ \bigl((a,b)\subset I\bigr),$ 
then $L\in \gT\bigl([a,b]\bigr)\quad \bigl(L\in\gT\bigl((a,b)\bigr)\bigr).$

$2.$ If $L\in \gT([a,b],\;[a,b]\subset  I)\quad \bigl(L\in \gT([a,b)),\;[a,b)\subset I \bigr),$ then there exists a solution of equation (\ref{eq1}) that is nowhere zero on $[a,b]\quad 
\bigl((a,b)\bigr).$
\end{theorem}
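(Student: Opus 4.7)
The plan is to treat the two statements separately. Statement~1 (sufficiency) will follow from Sturm's separation theorem by a short contradiction argument, while statement~2 (necessity) will be proved by exhibiting explicit non-vanishing solutions built from initial-value problems placed at the endpoints of the interval.

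For statement~1, I will assume $u$ is a solution of (\ref{eq1}) nowhere zero on the given interval $J$ (equal to $[a,b]$ or $(a,b)$) and suppose for contradiction that $L \notin \gT(J)$. Then some non-trivial solution $v$ has two zeros in $J$; by passing to consecutive zeros (zeros of a non-trivial solution of a linear ODE are isolated, or one can simply take the infimum of the zero set strictly above the first zero) I may assume these are $t_1 < t_2$ with $v \ne 0$ on $(t_1,t_2)$. Since $v(t_1)=0 \ne u(t_1)$, the solutions $u$ and $v$ are linearly independent, so Theorem~\ref{thsep} forces $u$ to possess a zero in $(t_1,t_2) \subset J$, contradicting the hypothesis on $u$.

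For statement~2 with $L \in \gT([a,b])$, I will take $y_a$ to be the solution of (\ref{eq1}) with $y_a(a)=0$ and $y_a'(a)=1$. Disconjugacy on $[a,b]$ forbids a second zero of $y_a$ there, while $y_a'(a)>0$ forces $y_a>0$ immediately to the right of $a$; hence $y_a>0$ throughout $(a,b]$. Symmetrically, the solution $y_b$ with $y_b(b)=0$, $y_b'(b)=-1$ is strictly positive on $[a,b)$. Then $y := y_a+y_b$ is a solution of (\ref{eq1}) with $y(a)=y_b(a)>0$, $y(b)=y_a(b)>0$, and $y>0$ on $(a,b)$, so $y$ is nowhere zero on the full closed interval $[a,b]$. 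For the half-open case $L \in \gT([a,b))$, the same $y_a$ works directly: its only zero allowed in $[a,b)$ is at $a$ itself, and the sign of $y_a'(a)$ then yields $y_a>0$ on $(a,b)$, as required.

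I do not anticipate any serious obstacle. The only asymmetry worth flagging is that in the half-open case one cannot form the symmetric sum $y_a+y_b$ since $b \notin J$, which is precisely why the conclusion is weakened from non-vanishing on $[a,b)$ to non-vanishing on $(a,b)$; the example $x''+x=0$ on $[0,\pi)$ (where no solution avoids zero on all of $[0,\pi)$ but $\sin t$ is strictly positive on $(0,\pi)$) shows this weakening is unavoidable.
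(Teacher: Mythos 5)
Your proposal is correct and follows essentially the same route as the paper: part 1 via Sturm's separation theorem (Theorem~\ref{thsep}) by contradiction, and part 2 via the two endpoint initial-value solutions $y_a$, $y_b$ and their sum, with $y_a$ alone handling the half-open case. Your write-up merely makes explicit the reduction to consecutive zeros in part 1 and the sign argument in part 2, which the paper leaves implicit.
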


\begin{proof}
1. The statement follows immediately from Theorem \ref{thsep}.

2. Let $J=[a,b],\;[a,b]\subset  I.$ Let us determine solutions $y_1(t)$ and $y_2(t)$ by initial conditions $y_1(a)=0,\, y_1'(a)=1$ and
$y_2(b)=$~$0,$ $y_2'(b)=-1.$ 
Since $L\in\gT([a, b]),$ one has 
$y_1(t)>0$ if $t\in (a, b]$, and $y_2(t)>0$ if $t\in [a, b)$.
The solution $y_1(t) + y_2(t)$ is the one required. If $L\in \gT([a,b)),$ then the required solution is $y_1.$ 
\end{proof}

It is possible that there are no solutions preserving sign on $[a,b)$. For instance, consider
$${L:=\frac{d^2}{dt^2}+1}\in\gT([0, \pi)).$$ Then any solution of equation $Lx=0$ has precisely one zero in $[0, \pi)$.

\section{Applications of disconjugacy}

Below we prove two theorems which demonstrate the role of the property of disconjugacy in the theory of differential equation (\ref{eq1}). These are the Factorization theorem (on representation of a linear ordinary differential operator $L$ as the product of linear differential operators of the first order, see, e.g., \cite{pol24},\cite{mam31}) and the generalized Rolle's Theorem (see, e.g., \cite[p. 63]{ps782}).

\begin{theorem}[Factorization theorem]\label{thPM} 
Suppose $J=[a,b]\subset I$ or $J=(a,b)\subset I.$ One has $L\in \gT(J),$ if and only if there exist functions $h_i, i=0,1,2$ such that $h_0',h_1$ are absolutely continuous,
$h_2$ is summable on $J,\;h_i(t)>0,\;h_0(t)h_1(t)h_2(t)\equiv 1$ on $J$, and
\begin{equation}
\label{PMeq1}
(Lx)(t)=h_2(t)\frac{d}{dt}h_1(t)\frac{d}{dt}h_0(t)x(t)\quad (t\in J,\quad x'\text{ is absolutely continuous on } J).
\end{equation}
\end{theorem}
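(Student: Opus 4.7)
The plan is to prove the two implications separately. The sufficiency of the factorization is a one-line first-integral argument, while the necessity comes from producing a nowhere-vanishing solution $\phi$ of $L\phi = 0$ on $J$ via Theorem~\ref{thnefkrit} and performing reduction of order.

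For the ``if'' direction, assume the factorization (\ref{PMeq1}) holds with $h_0, h_1, h_2$ positive on $J$ and $h_0 h_1 h_2 \equiv 1$. Dividing $Lx = 0$ by $h_2 > 0$ and integrating yields $h_1(t)\,(h_0 x)'(t) \equiv c_1$, whence
$$
h_0(t)\,x(t) = c_1 \int_{t_0}^{t} \frac{ds}{h_1(s)} + c_2, \qquad t_0 \in J.
$$
The integral on the right is strictly monotone since $1/h_1 > 0$; consequently the affine combination $c_1\!\int + c_2$ either vanishes identically (forcing $x \equiv 0$) or has at most one zero in $J$. Since $h_0 > 0$, the same conclusion holds for $x$, and therefore $L \in \gT(J)$.

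For the ``only if'' direction, assume $L \in \gT(J)$. By Theorem~\ref{thnefkrit}(2) (combined with Theorem~\ref{th21} and (\ref{eqdop}) together with a normal-family/diagonal argument in case $J$ is an open interval whose endpoints coincide with those of $I$) there exists a solution $\phi$ of (\ref{eq1}) with $\phi(t) > 0$ throughout $J$. Fix $t_0 \in J$, put $P(t) := \int_{t_0}^{t} p(s)\,ds$, and define
$$
h_0 := \frac{1}{\phi}, \qquad h_1 := \phi^2 e^{P}, \qquad h_2 := \frac{1}{\phi\, e^{P}}.
$$
These three functions are positive on $J$ and satisfy $h_0 h_1 h_2 \equiv 1$. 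A direct computation, expanding $h_2 (h_1 (h_0 x)')'$ and substituting $\phi'' = -p\phi' - q\phi$, reduces the right-hand side of (\ref{PMeq1}) to $x'' + p x' + q x$ for every $x$ with $x'$ absolutely continuous on $J$.

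Regularity is routine: since $\phi$ solves (\ref{eq1}) with $L^1_{\loc}$ coefficients, $\phi'$ is absolutely continuous, whence $h_0' = -\phi'/\phi^2$ is absolutely continuous (the denominator is $C^1$ and bounded away from zero on every compact subinterval of $J$), $h_1 = \phi^2 e^{P}$ is a product of absolutely continuous functions, and $h_2 = 1/(\phi e^{P})$ is positive and continuous, hence locally summable. The only delicate point — and in my view the main obstacle — is producing a \emph{single} positive solution $\phi$ on the whole of an open $J$ whose closure escapes $I$: one normalizes the positive solutions supplied by Theorem~\ref{thnefkrit}(2) on a compact exhaustion $[a_n, b_n] \uparrow J$ by $\phi_n(t_0) = 1$, rules out unbounded $|\phi_n'(t_0)|$ (otherwise a rescaled limit would be a nonnegative solution vanishing at $t_0$ with nonzero derivative, which is impossible), and extracts a subsequential limit $\phi \geq 0$ with $\phi(t_0) = 1$; strict positivity throughout $J$ then follows from the uniqueness theorem for (\ref{eq1}).
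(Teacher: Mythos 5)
Your proof is correct. The necessity half is essentially the paper's own argument: the paper also takes a positive solution $y$ supplied by Theorem \ref{thnefkrit} and sets $h_0=1/y$, $h_1=y^2/w$, $h_2=w/y$, where $w$ is the Wronskian of $y$ with a second solution; since $w=w(t_0)e^{-P}$ by Abel's formula, this is your choice $h_1=\phi^2e^{P}$, $h_2=1/(\phi e^{P})$ up to a harmless positive constant, and your direct expansion of $h_2\bigl(h_1(h_0x)'\bigr)'$ replaces the paper's appeal to $\{y,u\}$ being a common fundamental system. You genuinely differ in two places, both defensible. For sufficiency the paper only observes that $1/h_0>0$ solves $Lx=0$ and invokes Theorem \ref{thnefkrit}(1), hence ultimately Sturm separation; you instead integrate the factored equation to $h_0x=c_1\int_{t_0}^{t}h_1^{-1}\,ds+c_2$ and read off disconjugacy from strict monotonicity of the integral, which is more elementary and self-contained (and implicitly uses, correctly, that $h_1(h_0x)'$ is absolutely continuous with vanishing a.e.\ derivative, hence constant). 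Second, the paper's citation of Theorem \ref{thnefkrit} tacitly assumes a positive solution exists on all of $J$ even when $J=(a,b)$ is open with an endpoint outside $I$, a case not covered by the statement of Theorem \ref{thnefkrit}(2) (which requires $[a,b]\subset I$ or $[a,b)\subset I$); your compact-exhaustion and normalization argument, with positivity of the limit forced by uniqueness (a zero of a nonnegative nontrivial solution would be a double zero), supplies exactly the step the paper elides.
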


\begin{proof} 
\textit{Necessity.} Let $L\in \gT(J).$ According 
to Theorem \ref{thnefkrit} there exists a 
solution $y$ of equation (\ref{eq1}) such that $y(t)>0$ on $J.$ Let $u$ be a solution of equation (\ref{eq1})
linearly independent with $y$ and such that $w(t):= [y,u](t)>0.$ Let us consider the following linear differential operator of the second order
$$
\widehat Lx:= \frac{w}{y}\frac{dt}{dt}\frac{y^2}{w}\frac{dt}{dt}\frac{x}{y}.
$$
Since functions $y,u$ form a fundamental system of solutions of both equation (\ref{eq1}) and equation $\widehat Lx=0,$  the top coefficient in $\widehat L$ is
equal to $\frac{w}{y}\frac{y^2}{w}\frac{1}{y}\equiv 1,$ then $Lx\equiv \widehat Lx.$ These conditions are satisfied if $h_0=\frac{1}{y},\;h_1=\frac{y^2}{w},\;h_2=\frac{w}{y}.$

\textit{Sufficiency.} Suppose that we have identity (\ref{PMeq1}). Then function $y(t):=\frac{1}{h_0(t)}>0\;(t\in J)$ is a solution of equation (\ref{eq1}) satisfying the conditions of Theorem \ref{thnefkrit}. This implies that $L\in \gT(J).$
\end{proof}

\begin{theorem}[Generalized Rolle's theorem]
\label{thRol} 
Let $J=[a,b]\subset I$ or $J=(a,b)\subset I,\quad L\in \gT(J).$ Suppose that function $u$ has absolutely continuous on $J$ first derivative, and function $Lu$ is continuous. If there exist $m$ ($m\geqslant 2$) geometrically distinct zeros of $u$ in $J$, then $Lu$ has at least $m-2$ geometrically distinct zeros in $J$.
\end{theorem}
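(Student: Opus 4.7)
The natural approach is to apply the Factorization theorem (Theorem \ref{thPM}): since $L\in\gT(J)$, there exist positive functions $h_0,h_1,h_2$ on $J$, with $h_0'$ and $h_1$ absolutely continuous and $h_0h_1h_2\equiv 1$, such that $Lu = h_2\,(h_1\,(h_0 u)')'$. The plan is to reduce the statement to two successive applications of Rolle's theorem, using positivity of $h_0,h_1,h_2$ to preserve zero counts at every stage.

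Since $h_0>0$, the function $h_0u$ has exactly the same $m$ zeros $s_1<\dots<s_m$ as $u$. By hypothesis $u'$ is absolutely continuous so $u\in C^1(J)$, and $h_0\in C^1(J)$ because $h_0'$ is absolutely continuous; hence $h_0u$ is of class $C^1$. The classical Rolle theorem then yields a zero $r_k$ of $(h_0u)'$ in each open interval $(s_k,s_{k+1})$, giving $m-1$ distinct zeros $r_1<\dots<r_{m-1}$ in $J$. Multiplying by the positive function $h_1$, the absolutely continuous function $f:=h_1(h_0u)'$ carries the same $m-1$ zeros.

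The most delicate step is the passage from $f$ to $Lu=h_2f'$: because $f$ is only absolutely continuous, $f'$ is defined merely almost everywhere and the classical Rolle theorem does not apply verbatim. However, on each open subinterval $(r_k,r_{k+1})$ one has $\int_{r_k}^{r_{k+1}}f'(t)\,dt = f(r_{k+1})-f(r_k)=0$, so either $f\equiv 0$ on $[r_k,r_{k+1}]$ (whence $Lu\equiv 0$ there, which supplies many more zeros than needed), or $f'$ is strictly positive on a set of positive measure and strictly negative on another such set inside $(r_k,r_{k+1})$. Since $Lu=h_2f'$ almost everywhere with $h_2>0$, and $Lu$ is continuous by hypothesis, in the second case $Lu$ attains both positive and negative values inside $(r_k,r_{k+1})$; the intermediate value theorem then produces a zero of $Lu$ in this open interval.

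The $m-2$ open intervals $(r_k,r_{k+1})$, $k=1,\dots,m-2$, being pairwise disjoint, furnish $m-2$ geometrically distinct zeros of $Lu$. The main obstacle is precisely the final step: one has to substitute the classical Rolle theorem with a sign-change argument leveraging both the positivity of $h_2$ and the continuity of $Lu$, to compensate for the fact that the middle factor $f$ is only absolutely continuous rather than continuously differentiable.
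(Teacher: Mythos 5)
Your proof is correct and follows essentially the same route as the paper: factorize $L$ via Theorem \ref{thPM} and apply Rolle's theorem twice to $h_0u$ and $h_1\frac{d}{dt}(h_0u)$. The only difference is that where the paper simply invokes ``Rolle's theorem'' a second time, you correctly observe that $f:=h_1(h_0u)'$ is merely absolutely continuous and replace that step with the integral/sign-change argument combined with the continuity of $Lu$ and positivity of $h_2$ --- this fills in a detail the paper glosses over rather than changing the approach.
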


\begin{proof} 
According to Theorem \ref{thPM} one has representation (\ref{PMeq1}). Function $h_0u$ has $m$ geometrically distinct zeros in $J$. By Rolle's theorem both functions $\frac{d}{dt}h_0u$ and $h_1\frac{d}{dt}h_0u$ have at least $m-1$ geometrically distinct zeros in $J$. Again, by Rolle's theorem function $Lu$ has at least $m-2$ geometrically distinct zeros in $J$. The proof is complete.
\end{proof}

\section{Criteria for disconjugacy}

\subsection{Basic criteria}
Below we formulate some known criteria for disconjugacy based on Theorems \ref{thcomp} and \ref{thnefkrit}.

\begin{criteria}\label{const}
Let $I=(-\infty,\,+\infty)$, $p(t)\equiv p=\const$, $q(t)\equiv q=\const$. Then differential equation $(\ref{eq1})$ having constant coefficients $p(t) \equiv p$, $q(t) \equiv q$ is disconjugate on $I$ if and only if the roots of its characteristic equation
 $\lambda ^2+p\lambda +q=0$ are real.
\end{criteria}

\begin{proof}
Let $\nu$ be a real root of the characteristic equation. Then function $x(t):= e^{\nu t}$ is a solution of equation (\ref{eq1})
nowhere vanishing on $I.$ 
According to the first statement of Theorem \ref{thnefkrit}, equation (\ref{eq1}) is disconjugate on $I.$

Conversely, let (\ref{eq1}) be disconjugate on $I.$ Suppose that the characteristic equation has roots $\gamma \pm\delta i, \delta\ne 0.$ Then solution
$x(t)=e^{\gamma t}\cos\,\delta t$ of equation (\ref{eq1}) has infinitely many zeros in $I$, which contradicts to its disconjugacy on $I.$
\end{proof}

Let us consider equation
\begin{equation}\label{euler1}
x''+\frac{p}{t}x'+q(t)x=0\quad \bigl(t\in I:=(0,+\infty)\bigr),\quad\text{where}\;p=\const .
\end{equation}

\begin{criteria}\label{euler}
If $q(t)\leqslant \frac{(p-1)^2}{4t^2},$ then equation $(\ref{euler1})$ is disconjugate on $I:= (0,+\infty).$
\end{criteria}

\begin{proof}
Euler equation $x''+\frac{p}{t}x'+\frac{(p-1)^2}{4t^2}x=0$ is disconjugate on $I$ by Theorem \ref{thnefkrit} since it has solution $x(t)=t^{\frac{1-p}{2}},$ 
which is nowhere equal to zero on $I$ (let us also take into account (\ref{eqdop})). 
According to Theorem \ref{thcomp} equation (\ref{euler1}) is also disconjugate on this interval.
\end{proof}

The next sufficient condition of disconjugacy is due to A.M.~Lyapunov
\cite{cop71}.

\begin{criteria}\label{ljap}
Let $p(t)\equiv 0, \quad q(t) \geqslant 0$ and $\int_a^b q(t)dt \leqslant \frac {4}{b-a}.$ Then $L \in \gT([a, b]).$
\end{criteria}

\begin{proof}
Suppose that equation (\ref{eq1}) possesses a non-trivial solution $y(t)$ having two zeros in $[a,b].$
Since $y$ can not have multiple roots, we may assume, without loss of generality, that
\begin{equation}\label{ljap1}
y(a) = y(b) = 0. 
\end{equation}
Function $y$, as a solution of boundary value problem (\ref{eq1}), (\ref{ljap1}), satisfies the following integral equation 
\begin{equation}\label{ljap2}
y(t)= -\int_a^b G(t,s)q(s)y(s) ds, 
\end{equation}
where
$$
G(t,s)= 
\begin {cases} 
-\dfrac {(b-t)(s-a)}{b-a},&\text{if}\; a \leqslant s < t ,\\ 
-\dfrac {(t-a)(b-s)}{b-a},&\text{if}\; t\leqslant s \leqslant b 
\end {cases}
$$
is Green's function of equation $y''=0$ with boundary conditions (\ref{ljap1}). It is immediate that for $t \ne s$ 
\begin{equation}\label{ljap3}
|G(t, s)| < \frac {(b-s)(s-a)}{b-a}.
\end{equation}
Let $\max\limits_{s \in [a, b]}|y(s)| = |y(t^*)|$. Then (\ref{ljap2}) and (\ref{ljap3}) 
\begin {multline*}
|y(t^*)|= \left | \int_a^b G(t^*, s)q(s)y(s)ds \right | \leqslant \int_a^b \left |G(t^*, s) \right ||y(s)|q(s)ds < \\ 
|y(t^*)| \left | \int_a^b \frac {(b-s)(s-a)q(s)}{b-a}ds \right | \leqslant \frac {b-a}{4} \int_a^b q(s)ds 
\end {multline*} 
since $(b-s)(s-a)\! \leqslant \!\dfrac {(b-a)^2}{4}$ for $s\! \in \![a, b].$ Therefore, $1\!\! < \!\!\dfrac {b-a}{4} \displaystyle{\int_a^b} q(s)ds,$ which contradicts to the conditions of the theorem.
\end{proof}

\begin{corollary}\label{slthljap}
If $p(t)\equiv 0,$ and
$
\int_a^b q_{+}(t)\,dt\leqslant \frac{4}{b-a},
$
then $L\in \gT([a,b])$. 

Here $q_{+}(t):=q(t)$ if $q(t)>0$,  $q_{+}(t):=0$ if $q(t)\leqslant 0.$
\end{corollary}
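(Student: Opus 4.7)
The plan is to derive this corollary as an immediate consequence of Criterion \ref{ljap} (Lyapunov) combined with the Comparison Theorem \ref{thcomp}. The key observation is that although $q$ itself need not be nonnegative (so Lyapunov's criterion does not apply directly), the function $q_{+}$ is nonnegative by construction and dominates $q$ pointwise.

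Concretely, I would introduce the auxiliary operator
\begin{equation*}
\widetilde L x := x'' + q_{+}(t)\,x,
\end{equation*}
and note two things. First, since $q_{+}(t) \geqslant 0$ on $[a,b]$ and by hypothesis
\begin{equation*}
\int_{a}^{b} q_{+}(t)\,dt \leqslant \frac{4}{b-a},
\end{equation*}
Criterion \ref{ljap} applies to $\widetilde L$ and yields $\widetilde L \in \gT([a,b])$. Second, since $q(t) \leqslant q_{+}(t)$ for every $t \in [a,b]$ (with equality where $q>0$ and strict inequality where $q\leqslant 0$), the operators $L$ and $\widetilde L$ are exactly in the setting of Theorem \ref{thcomp}, with $q_1 = q$ and $q_2 = q_{+}$ (the common coefficient of $y'$ is $p \equiv 0$, as required). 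Invoking the Comparison Theorem then transfers disconjugacy from $\widetilde L$ to $L$, giving $L \in \gT([a,b])$, which is the claim.

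There is no real obstacle here; the only step requiring a moment of thought is checking that the pointwise inequality $q \leqslant q_{+}$ holds universally (which is clear from the definition $q_{+} = \max\{q,0\}$) and that the hypotheses of Criterion \ref{ljap} are genuinely satisfied by $q_{+}$ rather than by $q$. Thus the proof is essentially a two-line reduction, and the corollary should be viewed as showing that the positive part of $q$ is all that matters in Lyapunov's integral condition.
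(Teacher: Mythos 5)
Your proof is correct and is essentially identical to the paper's own argument: the paper likewise applies Criterion \ref{ljap} to the operator $\frac{d^2}{dt^2}+q_{+}$ and then invokes the Comparison Theorem \ref{thcomp} via the pointwise inequality $q\leqslant q_{+}$.
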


\begin{proof}
As we have already proved, $L_{+}:= \frac{d^2}{dt^2}+q_{+}\in \gT([a,b])$. At the same time, since $q(t)\leqslant q_{+}(t)$, one has $L\in \gT([a,b]).$
\end{proof}

\begin{remark}\label{z1} 
We note that the constant $4$ is the formulation of Criterion $\ref{ljap}$ is sharp. 
\end{remark}

The latter follows from the next example.
Suppose that function $v$ is twice continuously differentiable on $[0,1]$ and
$$
v(t)=t \quad (0 \leqslant t \leqslant \frac{1}{2}-\delta),\quad
v(t)=1-t \quad \text{ if } \quad t>\frac{1}{2}+ \delta, 
$$
$$
v(t)>0, \quad v''(t)<0 \quad \text{ if } \quad \frac{1}{2}-\delta < t < \frac{1}{2}\!+\! \delta.
$$
Define
$$
q(t)= 
\begin {cases} 
-\frac {v''(t)}{v(t)},&\text{if} \; t \in (0, 1) ,\\ 
0,&\text{if} \;t = 0,  \; t = 1. 
\end {cases}
$$
Clearly, $q$ is continuous, $q(t)\! \geqslant \!0$ on $[0, 1];\; L\! := \!\frac {d^2}{dt^2} + q(t) \notin \gT([0, 1]),$ since equation 
$Ly=0$ has solution $y=v(t)$ which has two zeros in $[0,1].$ However,  
$$
\frac {v''}{v} = \left ( \frac {v'}{v} \right )' + {\left (\frac {v'}{v} \right )}^2 \geqslant \left ( \frac {v'}{v} \right )',
$$
so the value of integral
$$
\int_0^1 q(t)dt = -\int_{\frac{1}{2}-\delta}^{\frac{1}{2}+\delta} \left ( \frac {v'}{v} \right )'dt= 
\left. -\frac {v'}{v}  \right |_{\frac{1}{2}-\delta}^{\frac{1}{2}+\delta} = \frac{4}{1-2\delta}
$$ 
can be made arbitrarily close to $4$ by choosing sufficiently small $\delta.$

\subsection{Semi-effective criteria}

Criterion \ref{thnefkrit} is an example of a \textit{non-effective} criterion of disconjugacy, i.e., a criterion formulated in terms of solutions of equation
(\ref{eq1}) rather than in terms of the coefficients of this equation. 

Let us now formulate a necessary and sufficient condition of disconjugacy of equation
(\ref{eq1}) belonging to Valle-Poussin \cite{vp29}; this criterion may be called \textit{semi-effective} \cite{lev69}, i.e., it is effective as a necessary condition, but non-effective as a sufficient condition.
Although this criterion is not expressed in terms of the coefficients of equation (\ref{eq1}), it can be used to obtain sufficient conditions of disconjugacy formulated in terms of the coefficients. 

\begin{theorem}\label{thVP}   
Let $[a, b] \subset I.$ One has $L \in \gT([a, b])$
if and only if there exists function
$v$ having first derivative absolutely continuous on $[a, b]$ and such that
\begin{equation}\label{vp1}
v(t) > 0\quad  (a < t \leqslant b), \qquad  Lv \leqslant 0 \quad \text{a.e. on}\; [a,b]. 
\end{equation} 
\end{theorem}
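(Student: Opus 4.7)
The plan is two-pronged: \emph{necessity} follows from a single previously proved statement, while \emph{sufficiency} rests on a Lagrange-identity/Wronskian comparison between $x$ and $v$. For necessity, if $L\in\gT([a,b])$ then the second part of Theorem~\ref{thnefkrit} furnishes a solution $y$ of $Ly=0$ with $y>0$ on $[a,b]$; taking $v:=y$ gives $v>0$ on $(a,b]$ and $Lv=0$, so (\ref{vp1}) holds. For sufficiency I argue by contradiction: assuming that $v$ satisfies (\ref{vp1}) but $L\notin\gT([a,b])$, some nontrivial solution $x$ of $Lx=0$ has two zeros in $[a,b]$. Replacing these with two consecutive zeros and reversing sign if necessary, I may take $c<d$ in $[a,b]$ with $x(c)=x(d)=0$ and $x>0$ on $(c,d)$; uniqueness for the Cauchy problem then forces $x'(c)>0$ and $x'(d)<0$.

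The core step is to introduce $W(t):=v(t)x'(t)-x(t)v'(t)$. Since both $x'$ and $v'$ are absolutely continuous, so is $W$, and a direct computation (Lagrange's identity) produces
\[
W'+p(t)\,W \;=\; v\,Lx - x\,Lv \;=\; -x\,Lv \;\geqslant\; 0
\quad\text{a.e.\ on }[c,d],
\]
where the middle equality uses $Lx=0$ and the inequality uses $x\geqslant 0$ on $[c,d]$ together with $Lv\leqslant 0$. Multiplying by the positive integrating factor $\mu(t):=\exp\bigl(\int_c^t p(s)\,ds\bigr)$ gives $(\mu W)'\geqslant 0$ a.e., so $\mu W$ is non-decreasing on $[c,d]$; in particular $\mu(c)W(c)\leqslant\mu(d)W(d)$.

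Evaluating $W$ at the endpoints now yields the contradiction. From $x(d)=0$ we get $W(d)=v(d)x'(d)$, which is strictly negative because $d>c\geqslant a$ forces $v(d)>0$ while $x'(d)<0$. From $x(c)=0$ we get $W(c)=v(c)x'(c)\geqslant 0$, with equality possible only in the borderline case $c=a$ and $v(a)=0$. Thus $\mu(c)W(c)\geqslant 0>\mu(d)W(d)$, contradicting the monotonicity of $\mu W$. The one subtle point --- and the only real obstacle --- is the asymmetry of hypothesis (\ref{vp1}), which permits $v(a)=0$ and therefore only guarantees $W(c)\geqslant 0$ rather than $W(c)>0$ when $c=a$; this is exactly absorbed by the fact that the inequality at the right endpoint is strict, so the contradiction survives.
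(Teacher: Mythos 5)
Your proof is correct, but the sufficiency argument is genuinely different from the paper's. The paper first disposes of the borderline case $v(a)=0$ by perturbing $v$ to $v+\varepsilon u$ with $u$ the solution satisfying $u(a)=1$, $u'(a)=0$ (which leaves $Lv$ unchanged and makes the test function strictly positive on all of $[a,b]$), and then forms the auxiliary operator $Mx:=x''+px'-\frac{v''+pv'}{v}x$, for which $v$ is a positive solution; Theorem~\ref{thnefkrit} gives $M\in\gT([a,b])$, and since $Lv\leqslant 0$ is equivalent to $-\frac{v''+pv'}{v}\geqslant q$, the Sturm comparison theorem (Theorem~\ref{thcomp}) yields $L\in\gT([a,b])$. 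You instead run a direct contradiction through the Lagrange identity $W'+pW=vLx-xLv$ for $W=vx'-xv'$, an integrating factor, and a sign count at two consecutive zeros of a putative oscillating solution; in effect you inline the proof of the comparison theorem rather than citing it. Your route is more self-contained and handles $v(a)=0$ without the perturbation step, at the cost of redoing an argument the paper already has available; the paper's route is shorter given its established machinery and exhibits $v$ explicitly as a solution of a disconjugate majorant equation, which is the structural point the surrounding sections exploit. All the delicate points in your version check out: $W$ and $\mu W$ are absolutely continuous under the stated regularity, $v(a)\geqslant 0$ follows from continuity, and the strict inequality $W(d)<0$ at the right endpoint indeed absorbs the possible degeneracy $W(c)=0$ when $c=a$.
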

\begin{proof}
Necessity follows from Theorem \ref{thnefkrit}. Let us show that the conditions of the theorem are sufficient. In the case $v(a) = 0$ let us put
$\widetilde v(t) = v(t)+\varepsilon u(t)$, where $\varepsilon > 0,$ and $u(t)$ is the solution of equation (\ref{eq1}) with initial conditions
$u(a)=1, \; u'(a)=0$. For $\varepsilon$ sufficiently small we have $\widetilde v(t) > 0$ on $[a, b].$
Hence we may assume, without loss of generality, that
$v(t) >0$ on $[a, b].$  Let us consider equation
\begin{equation}\label{vp2}
Mx:= x''+px'-\frac{v''+pv'}{v}x=0.
\end{equation} 
According to Theorem \ref{thnefkrit} $M \in \gT([a, b])$ (since equation (\ref{vp2}) has solution $v$ positive on $[a, b]$). 
By our assumptions $v''(t) + p(t)v'(t)+q(t)v(t) \leqslant 0,$ i.e., $- \frac {v''(t)+p(t)v'(t)}{v(t)} \geqslant q(t),$  a.e. on $[a,b] .$  
The statement of the theorem now follows from Theorem \ref{thcomp}.
\end{proof}

The proof of the next statement follows the same argument.

\begin{theorem}\label{VP'} 
If there exists function $v$ having first derivative absolutely continuous on $[a, b)$ ans such that 
\begin{equation}\label{vp3}
v(t) > 0\quad (a < t < b ), \qquad Lv \leqslant 0 \quad \text{ a.e. on}\quad (a,b), 
\end{equation} 
then  $L \in \gT([a, b)).$
\end{theorem}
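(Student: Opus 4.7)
The plan is to reduce Theorem \ref{VP'} to Theorem \ref{thVP} by an exhaustion argument on a sequence of closed subintervals approaching $b$ from the left. The hypothesis gives a test function $v$ on the half-open interval $[a,b)$, and on every \emph{closed} subinterval $[a, b_n]$ with $b_n < b$ the function $v$ satisfies precisely the hypotheses of Theorem \ref{thVP}.

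First I would pick a sequence $b_n \to b-$ with $a < b_n < b$. For each $n$ the restriction of $v$ to $[a, b_n]$ has absolutely continuous first derivative on $[a, b_n]$, satisfies $v(t) > 0$ for $a < t \leqslant b_n$ (since $[a, b_n] \subset [a,b)$ and by assumption $v > 0$ on $(a,b)$, so in particular on $(a, b_n]$), and $Lv \leqslant 0$ a.e. on $[a, b_n]$. Applying Theorem \ref{thVP} on each such interval yields $L \in \gT([a, b_n])$ for every $n$.

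To conclude, I would argue directly: suppose, toward a contradiction, that some nontrivial solution $x$ of $Lx=0$ has two zeros $t_1 < t_2$ in $[a,b)$. Choose $n$ large enough that $b_n > t_2$; then $x$ has two zeros in $[a, b_n]$, contradicting $L \in \gT([a, b_n])$. Hence $L \in \gT([a,b))$. (Equivalently, one can invoke the identity \eqref{eqdop} together with Theorem \ref{th21}, which gives $\gT([a,b)) = \gT((a,b)) = \bigcap_n \gT([a_n, b_n])$, and observe that the same reasoning places $L$ in each $\gT([a_n, b_n])$.)

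There is essentially no obstacle here: the only point to verify is that $v$ genuinely satisfies the hypotheses of Theorem \ref{thVP} on each $[a, b_n]$, and this is immediate because $b_n$ is strictly interior to $(a,b)$, so positivity of $v$ at the right endpoint $b_n$ is automatic from the hypothesis. The case $v(a) = 0$ has already been absorbed into Theorem \ref{thVP} via the perturbation $\widetilde v = v + \varepsilon u$, so it requires no separate treatment here.
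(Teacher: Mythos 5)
Your proof is correct, but it takes a different route from the paper's. The paper disposes of Theorem \ref{VP'} with the single remark that it ``follows the same argument'' as Theorem \ref{thVP}: that is, one repeats the de la Vall\'ee-Poussin construction directly on the half-open interval --- perturb $v$ at $a$ if $v(a)=0$, form the auxiliary equation $Mx:=x''+px'-\frac{v''+pv'}{v}x=0$ which has the positive solution $v$ on $(a,b)$, invoke Theorem \ref{thnefkrit} together with Theorem \ref{th21} to get $M\in\gT([a,b))$, and finish with the comparison theorem. You instead reduce to the already-proved closed-interval statement by exhausting $[a,b)$ with closed subintervals $[a,b_n]$, $b_n\to b-$, and observing that two zeros of a solution in $[a,b)$ would both lie in some $[a,b_n]$. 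Your reduction is clean and arguably safer: it sidesteps any uniformity questions near the open endpoint $b$ (for instance, whether the perturbation $\widetilde v=v+\varepsilon u$ can be made positive on all of $[a,b)$ at once, which is not obvious when $v$ may tend to $0$ as $t\to b-$), since every step takes place on a compact subinterval where Theorem \ref{thVP} applies verbatim. What the paper's route buys is economy --- no new argument at all --- while yours buys a self-contained verification at the cost of a short limiting step that is in any case already codified in the identity (\ref{eqdop}) and Theorem \ref{th21}.
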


\subsection{Effective criteria}

By choosing a particular `test' function $v$, one can obtain an effective criterion for disconjugacy.

\begin{criteria}\label{A}
If $q(t)\leqslant 0$ on $[a,b]\subset I$ $\bigl(\text{or on }(a,b)\subset I\bigr)$, then $L\in\gT([a,b])$ $\left(\text{resp., }L\in \gT\bigl((a,b)\bigr)\right).$
\end{criteria}

\begin{proof}
We put $v(t)\equiv 1$ and then use Theorem \ref{thVP} \;(Theorem \ref{VP'}).
\end{proof}

\begin{criteria}\label{B}
Suppose that $p(t)=O(t-a)$ if $t\to a+,\;p(t)=O(b-t)$ if $t\to b-$ $($in particular, $p(t)\equiv 0).$ If 
\begin{equation*}
\frac{\pi}{b-a}\cot\,\frac{\pi (t-a)}{b-a}p(t)+q(t)\leqslant \frac{\pi ^2}{(b-a)^2},
\end{equation*}
then $L \in \gT\bigl([a, b)\bigr).$
\end{criteria}

\begin{proof}
Let us choose $v(t)\equiv \sin\,\frac{\pi (t-a)}{b-a} $ and then use Theorems  \ref{VP'} and \ref{th21}.
\end{proof}

\begin{criteria}\label{C}
Suppose that we have inequality
\begin{equation}\label{C1}
|p(t)|\cdot \left | \frac{b+a}{2}-t\right |+|q(t)|\cdot \frac{(b-t)(t-a)}{2}\leqslant 1
\end{equation}
or inequality
\begin{equation}\label{C2}
\frac{b-a}{2}\underset{t\in (a,b)}{\essup}\,|p(t)|+\frac{(b-a)^2}{8}\underset{t\in (a,b)}{\essup}\,|q(t)|\leqslant1.
\end{equation}
Then $L \in \gT\bigl([a, b)\bigr).$
\end{criteria}

\begin{proof}
Indeed, we take $v(t)\equiv \frac{(b-t)(t-a)}{2}$ and then refer to Theorems \ref{th21} and \ref{VP'}.
\end{proof}

Let us note that inequality (\ref {C2}) implies inequality (\ref{C1}).
Let $P(t,\lambda):= \lambda ^2+p(t)\lambda +q(t)$ be the `characteristic' polynomial.

\begin{criteria}\label{D}
If there exists $\nu \in \mathbb R$ such that $P(t,\nu)\leqslant 0\quad (t\in (-\infty,+\infty)),$ then equation (\ref{eq1}) is disconjugate on $(-\infty,+\infty).$
\end{criteria}

\begin{proof}
One has $v(t):= e^{\nu t}>0$ and $(Lv)(t)=e^{\nu t}P(t,\nu)\leqslant 0$ on  $(-\infty,+\infty).$ 
The rest follows from Theorem \ref{thVP}.
\end{proof}

Let us now formulate criteria that can be obtained from Theorem
 \ref{thVP}\;(Theorem  \ref{VP'}) using a `test' function depending on coefficients of equation 
(\ref{eq1}).

{\bf{$1^o.$}}
Let us consider equation
\begin{equation}\label{ord2.50}
\widetilde Lx:=  x''+Px'+Qx=0 
\end{equation}
having constant coefficients $P$ and $Q$, in assumption that it is disconjugate on $[a,b).$
Let $v$ be the solution of boundary value problem  $\widetilde Lv=-1,\quad v(a)=v(b)=0,$ let $\widetilde C(t,s)$
be Cauchy's function of equation (\ref{ord2.50}). Then
$$
\widetilde C(t,s)>0\quad (a\leqslant s<t<b)\quad \text{and}\quad v(t)=\int_a^b M(t,s)\,ds>0\quad (t\in (a,b)),
$$
where
$$
M(t,s):=\left\{
\begin{array}{rcl}
\dfrac{\widetilde C(b,t)\cdot \widetilde C(s,a)}{\widetilde C(b,a)},\quad a\leqslant s\leqslant t \leqslant b,\\
\dfrac{\widetilde C(t,a)\cdot \widetilde C(b,s)}{\widetilde C(b,a)},\quad a\leqslant t< s \leqslant b
\end{array}
>0,\quad (t,s)\in ((a,b)\times (a,b)).
\right.
$$
Since $(Lv)(t)=-1+\bigl(p(t)-P\bigr)v'(t)+\bigl(q(t)-Q\bigr)v(t),$ inequality $(Lv)(t)\leqslant 0$
is satisfied if
\begin{equation}\label{gener}
\bigl(p(t)-P\bigr)\int_a^b \frac{\partial M(t,s)}{\partial t}\,ds+\bigl(q(t)-Q\bigr)\int_a^b M(t,s)\,ds)ds\leqslant 1, \quad t\in (a,b).
\end{equation}
As a result, we get the following statement.

\begin{criteria}\label{XA1}
If $(\ref{gener}),$ then (\ref{eq1}) is disconjugate on $[a,b).$
\end{criteria}

The special choice of coefficients $P$ and $Q$ 
can lead to criteria for disconjugacy that are more subtle than the ones formulated above.

{\bf{$2^o.$}}
Consider the particular case $Q=0.$ We have
$$
M(t,s)=\left\{
\begin{array}{rcl}
\dfrac{(1-e^{-P(b-t)})(1-e^{-P(s-a)})}{P(1-e^{-P(b-a)})}\quad (s\leqslant t), \\
\dfrac{(1-e^{-P(t-a)})(1-e^{-P(b-s)})}{P(1-e^{-P(b-a)})}\quad (s>t).
\end{array}
\right.
$$
It is immediate that
\begin{multline*}
 v(t)=
\frac{(1-e^{-P(b-t)})(t-a-\frac{1}{P}(1-e^{-P(t-a)}))}{P(1-e^{-P(b-a)})}+\\ 
\frac{(1-e^{-P(t-a)})(b-t-\frac{1}{P}(1-e^{-P(b-t)}))}{P(1-e^{-P(b-a)})}\leqslant 
\frac{2(\frac{b-a}{2}-\frac{1}{P}(1-e^{-P\frac{b-a}{2}}))}{P(1+e^{-P\frac{b-a}{2}}))}, \\
v'(t)=\frac{P\left((b-t)e^{-P(t-a)}-(t-a)e^{-P(b-t)}\right)+e^{-P(b-t)}-e^{-P(t-a)}}{P\left(1-e^{-P(b-a)}\right)}, \\
|v'(t)|\leqslant \frac{|P(b-a)+e^{-P(b-a)}-1|}{P(1-e^{-P(b-a)})}.
\end{multline*}
Since condition $Lv\leqslant 0$ is now equivalent to inequality 
$\bigl(p(t)-P\bigr)v'(t)+q(t)v(t)\leqslant 1,$ 
we get the following criterion.

\begin{criteria}\label{XA2}
If
$$
|p(t)\!-\!P| \frac{|P(b-a)\!+\!e^{-P(b-a)}-1|}{P(1-e^{-P(b-a)})}\!+\!|q(t)| 
\frac{2(\frac{b-a}{2}-\frac{1}{P}(1-e^{-P\frac{b-a}{2}}))}{P(1+e^{-P\frac{b-a}{2}}))}\!\leqslant \!1
$$ 
$(a<t<b),$ then equation (\ref{eq1}) is disconjugate on $[a,b).$
\end{criteria}

{\bf{$3^o.$}}
If we take, instead of auxiliary equation (\ref{ord2.50}), equation $\widetilde Lx:=  x''+p(t)x'=0$,
and take as $v$ the solution of problem
$\widetilde Lv=-1,\quad v(a)=v(b)=0$,
we obtain the following criterion.

\begin{criteria}\label{XA3}
If $
q(t)\,\int_a^b M(t,s)\,ds\leqslant 1, \quad t\in (a,b),
$
where
$$
M(t,s)=\left\{
\begin{array}{rcl}
\dfrac{\int_t^be^{-\int_t^{\sigma}p(\mu)\,d\mu}d\sigma\cdot \int_a^se^{-\int_a^{\sigma}p(\mu)\,d\mu}d\sigma} 
{\int_a^be^{-\int_a^{\sigma}p(\mu)\,d\mu}d\sigma}\quad (s\leqslant t), \\
\dfrac{\int_a^te^{-\int_a^{\sigma}p(\mu)\,d\mu}d\sigma\cdot \int_s^be^{-\int_s^{\sigma}p(\mu)\,d\mu}d\sigma} 
{\int_a^be^{-\int_a^{\sigma}p(\mu)\,d\mu}d\sigma}\quad (t<s),
\end{array}
\right.
$$
then equation (\ref{eq1}) is disconjugate on $[a,b).$
\end{criteria}

\section{A new criterion for disconjugacy}

\textbf{1.~}In what follows, we derive a second order criterion for disconjugacy on the whole real axis $\mathbb R$.
Let us consider differential equation
\begin{equation}\label{ord2.8}
\widetilde Lx:=  x''+px'+qx=0 
\end{equation}
having constant coefficients $p$ and $q$. As was shown before (see Criterion \ref{const}), disconjugacy of equation (\ref{ord2.8})
on $\mathbb R$ is equivalent to inequality $p^2-4q\geqslant 0.$

We associate to equation (\ref{ord2.8}) a point $\widetilde{\mathcal L}=(p,q)$ in the $(p,q)$-plane $\Pi$.
Let
$$
\mathfrak N:= \{(p,q):p^2-4q\geqslant 0\},\qquad \mathfrak O:= \mathbb R^2\setminus \mathfrak N, \qquad \mathfrak M_{\pm} (\gamma):=\{(p,q): q\leqslant -\gamma ^2\pm\gamma \,p\}.
$$
Then according to Criterion \ref{const} 
$$
\widetilde L\in\gT \bigl((-\infty,\,+\infty)\bigr) \Longleftrightarrow \widetilde{\mathcal L}\in\mathfrak N.
$$

Let us now consider differential equation
\begin{equation}\label{ord2.9}
Lx:=  x''+p(t)x'+q(t)x=0
\end{equation}
with locally integrable coefficients on $(-\infty,\,+\infty)$. 
Every equation of form (\ref{ord2.9}) gives rise to a `curve' $G_L=\{t: (p(t),q(t))\}$ in plane $\Pi$ (we use quotation marks since this curve is determined up to a set of measure zero),
more precisely, it determines a motion $DG_L$ along this curve. 

\vspace*{2mm}

Now, the inclusion $G_L \subset \mathfrak N$ is neither necessary nor sufficient for the disconjugacy of equation (\ref{ord2.9}) on $\mathbb R:$
for instance, equation $x''-\frac{t}{2}x'+\frac{t^2}{16}x=0,$ despite the above inclusion, possesses a solution $u(t)\equiv e^{\frac{t^2}{8}}\sin\,\frac{t}{2},$ 
which has two zeros on each interval $[2k\pi,\,2(k+1)\pi]\;(k\in \mathbb Z)$, i.e., this equation is not disconjugate.
At the same time, equation $$ x''+tx'+\left(\frac{t^2}{4}+\frac{1}{2}\right)x=0$$ having solution $u(t)\equiv e^{-\frac{t^2}{4}}>0 \; (t\in\mathbb R),$
is disconjugate on $\mathbb R$ by Theorem \ref{VP'}, however $G_L\subset \mathfrak O.$ The same is true for a more general equation
\begin{equation}\label{ord2.12}
x''+p(t)x'+\left(\frac{p^2(t)}{4}+\frac{1}{2}p'(t)\right)x=0,\qquad p'(t)>0,\; t\in \mathbb R;
\end{equation}
it has a solution
$x=e^{-\frac{1}{2}{\displaystyle\int_0^t} p(s)\,ds}>0,\;t \in \mathbb R.$ 
Another example confirming that $G_L \subset \mathfrak N$ is neither necessary nor sufficient for disconjugacy is given by equation
\begin{equation}\label{ord2.11}
x''+\dfrac{\sin\,t}{2+\sin\,t}x=0, 
\end{equation}
which is disconjugate on $\mathbb R$ by Theorem  \ref{VP'}, but $G_L\bigcap \mathfrak N\ne\varnothing,\;G_L\bigcap \mathfrak O\ne\varnothing.$

Nevertheless, we have the following result.

\begin{theorem}\label{osnov}
{\it Suppose that one of the following conditions is satisfied:

$1)$ We have $p(t)\equiv p=\const,\;G_L\subset\mathfrak N.$

$2)$ $G_L\subset\mathfrak N, \;G_L$ is a line or line segment.
 
$3)$ We have $G_L\subset \mathfrak M_{+}(\gamma)$ (respectively, $G_L\subset \mathfrak M_{-}(\gamma)$) for a certain $\gamma \geqslant 0$.

$4)$ The function $p$ is differentiable,  $p'(t)\geqslant 0$ $ \bigl(p'(t)\leqslant 0\bigr)$ on  $\mathbb R$, and $G_L\subset\mathfrak N.$

$5)$ The function $p$ is differentiable,  $p'(t)\geqslant 0$ $\bigl(p'(t)\leqslant 0\bigr)$ on $\mathbb R$, and
$$
q(t)\leqslant \dfrac{p^2(t)}{4}+\frac{1}{2}p'(t)\qquad \Bigl(q(t)\leqslant \dfrac{p^2(t)}{4}-\frac{1}{2}p'(t)\Bigr).
$$

$6)$ Suppose that function $r:\mathbb R\to \mathbb R$ is continuous, function $p$ is differentiable and one of the following conditions is satisfied: 
\begin{equation}\label{dop21}
p'(t)\geqslant 2r(t)\quad (p'(t)\leqslant -2r(t))\;(t\in \mathbb R)  
\end{equation}
or
\begin{equation}\label{dop22}
p^2(t)-4p'(t)+r(t)\leqslant 0\quad (p^2(t)+4p'(t)+r(t)\leqslant 0)\; (t\in \mathbb R)
\end{equation}
and also
$q(t)\leqslant \frac{p^2(t)}{4}+r(t)$ $(t\in \mathbb R).$ 

\vspace*{1mm}

Then equation $(\ref{ord2.9})$ is disconjugate on $\mathbb R.$}
\end{theorem}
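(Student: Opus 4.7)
I intend to prove the theorem by a single mechanism: in each of the six situations I will exhibit a strictly positive $v \in C^1(\mathbb{R})$ with absolutely continuous derivative satisfying $Lv \leqslant 0$ a.e.\ on $\mathbb{R}$. Applying Theorem \ref{VP'} to every compact subinterval $[a_n, b_n]$ and invoking (\ref{eqdop}) then gives $L \in \gT(\mathbb{R})$. Writing $v = e^{A(t)}$ with $\lambda := A'$ yields $Lv = v\,(\lambda' + \lambda^2 + p\lambda + q)$, so the construction of $v$ reduces to exhibiting $\lambda$ (constant or variable) that satisfies the Riccati-type inequality $\lambda' + \lambda^2 + p\lambda + q \leqslant 0$.

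For the ``linear'' cases 1), 2), 3) a constant $\lambda$ suffices. In 1) take $\lambda = -p/2$: the expression becomes $q - p^2/4 \leqslant 0$, which is exactly $G_L \subset \mathfrak{N}$. In 3) take $\lambda = \mp\gamma$ according as $G_L \subset \mathfrak{M}_{\pm}(\gamma)$; the defining inequality of $\mathfrak{M}_{\pm}(\gamma)$ emerges directly. Case 2) is the most involved: parametrize the non-vertical line containing $G_L$ as $q = \alpha p + \beta$ (the vertical case reduces to 1)); then the Riccati expression is affine in $p$, so along a segment with endpoints $(p_1,q_1),(p_2,q_2)$ it suffices to find a constant $\lambda$ in the intersection of the intervals $J_i := \{\lambda : \lambda^2 + p_i\lambda + q_i \leqslant 0\}$. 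The non-emptiness $J_1 \cap J_2 \neq \varnothing$ reduces to the triangle-type bound $|p_1-p_2| \leqslant \sqrt{p_1^2 - 4q_1} + \sqrt{p_2^2 - 4q_2}$, which I will derive from the non-negativity on $[0,1]$ of the quadratic $g(s) := p(s)^2 - 4q(s)$ (provided by $G_L \subset \mathfrak{N}$) via a short case analysis on the location of the vertex of $g$.

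For cases 4), 5), 6) I take the Liouville-type $\lambda(t) = -p(t)/2$; the Riccati expression becomes $q - p^2/4 - p'/2$, so $Lv \leqslant 0$ amounts to $q \leqslant p^2/4 + p'/2$. This is precisely case 5); case 4) ($q \leqslant p^2/4$ combined with $p' \geqslant 0$) implies it; in case 6) under (\ref{dop21}), combining $q \leqslant p^2/4 + r$ with $r \leqslant p'/2$ delivers it. Under the alternative hypothesis (\ref{dop22}) a different $\lambda$ is used (for instance $\lambda = -p$, giving Riccati expression $q - p'$), with the discriminant-type inequality $p^2 - 4p' + r \leqslant 0$ supplying the required bound. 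The dual (parenthesized) statements with the opposite sign of $p'$ are handled by the symmetric construction $\lambda = p/2$, equivalently by applying the Liouville substitution after the reflection $t \mapsto -t$.

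The step I expect to be the main obstacle is the geometric lemma for case 2): although the required bound $|p_1 - p_2| \leqslant \sqrt{D_1} + \sqrt{D_2}$ (with $D_i := p_i^2 - 4q_i$) is eventually a short computation from the non-negativity of $g$ on $[0,1]$, executing it cleanly requires separating the position of the vertex of $g$ relative to $[0,1]$: in the interior case one uses $g(\text{vertex}) \geqslant 0$ directly, and in the exterior case one deduces the bound from the monotonicity of $g$ on $[0,1]$ together with $g(0), g(1) \geqslant 0$.
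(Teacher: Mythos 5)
The genuine gap is in your disposal of the parenthesized (``$p'\leqslant 0$'') halves of cases 4)--6): neither of your two proposed mechanisms works. With $\lambda=p/2$ the Riccati expression is $\tfrac{p'}{2}+\tfrac{p^2}{4}+\tfrac{p^2}{2}+q=q+\tfrac{3p^2}{4}+\tfrac{p'}{2}$, which is not the mirror image of $q-\tfrac{p^2}{4}-\tfrac{p'}{2}$. And the reflection $y(t):=x(-t)$ produces the equation $y''-p(-t)\,y'+q(-t)\,y=0$, whose first coefficient $P(t)=-p(-t)$ satisfies $P'(t)=p'(-t)$: the sign of the derivative is \emph{preserved}, not reversed, so the promised reduction to the $p'\geqslant 0$ case never happens. (The paper's own proof commits the same error by writing the reflected equation with unreflected arguments.) Worse, these sub-statements cannot be proved because they are false: the equation $x''-\tfrac{t}{2}x'+\tfrac{t^2}{16}x=0$, which the paper itself exhibits as non-disconjugate via the solution $e^{t^2/8}\sin\tfrac{t}{2}$, has $p'\equiv -\tfrac12\leqslant 0$ and $q=\tfrac{p^2}{4}$, i.e.\ it satisfies the hypotheses of the parenthesized case 4). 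A separate, smaller failure occurs in case 6) under (\ref{dop22}): your $\lambda=-p$ requires $q\leqslant p'$, but the hypotheses only give $q\leqslant \tfrac{p^2}{4}+r\leqslant 4p'-\tfrac34 p^2$, which implies $q\leqslant p'$ only when $p'\leqslant \tfrac{p^2}{4}$; that step does not close (the paper's computation of this sub-case contains an arithmetic slip and suffers the same defect unless $r\leqslant 0$).

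Everything else is sound and follows the paper's route: positive test function $v$, Theorem \ref{VP'} on compact subintervals plus the comparison theorem, with $\lambda=-p/2$, $\lambda=\mp\gamma$, and the Liouville choice $\lambda=-p(t)/2$ delivering cases 1), 3), and the unparenthesized halves of 4), 5), 6)--(\ref{dop21}) exactly as in the paper. Your case 2) is actually \emph{better} than the paper's: the paper asserts that any line or segment inside $\mathfrak N$ has equation $q=-\gamma^2+kp$ with $|k|\leqslant\gamma$, which fails for segments whose extending line leaves $\mathfrak N$ (e.g.\ $\{(p,p):p\in[-2,-1]\}$ lies in $\mathfrak N$ but its line $q=p$ does not, and the paper's $v=e^{-kt}$ then fails). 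Your interval-intersection lemma $|p_1-p_2|\leqslant\sqrt{D_1}+\sqrt{D_2}$, proved by locating the vertex of the convex quadratic $g(s)=p(s)^2-4q(s)$ relative to $[0,1]$, is correct, and the affinity of $\lambda^2+p\lambda+q$ in $(p,q)$ then extends the endpoint inequalities to the whole segment; this genuinely repairs the paper's argument for case 2).
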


\begin{proof}
\,1) Let us define $v(t):= e^{-\frac{p}{2}t}>0.$ Then  $(Lv)(t)=e^{-\frac{p}{2}t}\left(q(t)- \frac{1}{4}p^2\right)\leqslant 0\;(t\in\mathbb R).$ 
We now apply Theorem \ref{VP'} to complete the proof.

2)\, The equation of such a line is either
$q(t)\equiv q=\const \leqslant 0$ for any $p(t)$, or
$p=p(t)$, $q=-\gamma ^2+k\,p(t)$, where $\quad |k|\leqslant \gamma$ ($\gamma >0$)
(if $k=\pm \gamma$ then the line is tangent to the parabola $q=\frac{1}{4}p^2$). In the first case the disconjugacy of equation (\ref{ord2.9}) on $\mathbb R$ follows from Theorem \ref{thcomp}.
In the second case the function $v(t):= e^{-kt}>0\;(t\in\mathbb R)$
satisfies condition
$(Lv)(t)=e^{-kt}(k^2- \gamma ^2)\leqslant 0\;(t\in\mathbb R).$  
Theorem \ref{VP'} now concludes the proof.

3) \,We put $v(t):=e^{-\gamma\,t}$ (accordingly, $v(t):=e^{\gamma\,t}$) and use Theorems \ref{thcomp} and \ref{VP'}.

4)\, Let $p'(t)\geqslant 0$, $L_2x:= x''+p(t)x'+\frac{p^2(t)}{4}\,x.$ We put $v(t):=\exp\left(-\frac{1}{2}{\displaystyle\int_0^t} p(s)\,ds\right).$  Then $v(t) >0,$ and
$$(L_2v)(t)=-\frac{1}{2}p'(t)\,e^{-\frac{1}{2}{\displaystyle\int_0^t} p(s)\,ds}\leqslant 0,\;t\in \mathbb R.$$ It follows that equation $L_2x=0$ is disconjugate on real line.
The disconjugacy of equation (\ref{ord2.9}) now follows from Theorem \ref{thcomp}.

If $p'(t)\leqslant 0$ then we put $y(t)=x(-t)$ and obtain equation $y''-p(t)y'+q(t)y=0$ of the form considered above. 

5) Suppose that $p'(t)\!\geqslant \!0.$ Then equation (\ref{ord2.12})
has solution
$$v(t)\!=\!\exp\left(-\frac{1}{2}{\displaystyle\int_0^t} p(s)\,ds\right)\!>~0,$$ hence it is disconjugate on $\mathbb R$.
Again, the disconjugacy of equation (\ref{ord2.9}) now follows from Theorem \ref{thcomp}.
In the case $p'(t)\!\leqslant \!0$ we follow the same argument as in the previous paragraph.

6) Suppose that the first inequality (\ref{dop21}) is satisfied. Let us consider the differential operator
\begin{equation}\label{dop23}
L_2x:= x''+p(t)x'+\left(\frac{p^2(t)}{4}+r(t)\right)\,x.
\end{equation}
We put 
$
v(t):=\exp\left(-\frac{1}{2}{\displaystyle\int_0^t} p(s)\,ds\right).$
Then $v(t)>0$ and
$$ (L_2v)(t)=\left(-\frac{1}{2}p'(t)+r(t)\right)\,e^{-\frac{1}{2}{\displaystyle\int_0^t} p(s)\,ds}\leqslant 0,\;t\in \mathbb R.
$$ 
Consequently,
$L_2\in\gT ((-\infty,\,+\infty)).$ The disconjugacy of (\ref{ord2.9}) now follows from Theorem \ref{thcomp}.

If the second inequality (\ref{dop21}) holds, we put $y(t):=x(-t)$ thus obtaining an equation
$$
y''-p(t)y'+q(t)y=0,$$ and a differential operator
$$L_2y:= y''-p(t)y'+\left(\frac{p^2(t)}{4}+r(t)\right)\,y, 
$$
for which we put
$v(t):=e^{\frac{1}{2}{\int_0^t} p(s)\,ds}.$ The rest of the argument is the same as above.

Suppose that the first inequality (\ref{dop22}) holds. We define $v(t):=e^{-\int_0^t p(s)\,ds}\;(>0).$ Then
$$(L_2v)(t)=\left(p^2(t)-\frac{1}{2}p'(t)-p^2(t)+\frac{p^2(t)}{4}+r(t)\right)\,e^{-\frac{1}{2}{\displaystyle\int_0^t} p(s)\,ds}\leqslant 0,\;t\in \mathbb R.$$ 
Therefore, we have $L_2\in\gT ((-\infty,\,+\infty))$, so it suffices to apply Theorem \ref{thcomp} to complete the proof. In the case the second inequality (\ref{dop22}) is satisfied, the argument is the same as above.
\end{proof}

We note that inequality (\ref{dop22}) is satisfied (in fact, as an identity), e.g., in the case when
$$
r(t)\equiv -R^2, \qquad p(t)=\dfrac{R\left(1-c^2e^{\frac{Rt}{2}}\right)}{1+c^2e^{\frac{Rt}{2}}}.
$$ 
Then equation $L_2x=0$ has solution
$$
x(t)=e^{-\displaystyle\int_0^t \dfrac{R\left(1-c^2e^{\frac{Rs}{2}}\right)}{1+c^2e^{\frac{Rs}{2}}}\,ds}\;>0.
$$

\textbf{2.~}Finally, let us consider equation
\begin{equation}\label{ord2.13}
Lx:=  x''+p(t)x'+q(t)x=0 \quad (t\in (a,+\infty))
\end{equation}
with coefficients continuous on $(a,+\infty)$. The substitution $t\to a+t^2$ transforms equation (\ref{ord2.11}) into equation
\begin{equation}\label{ord2.14}
Lx:=  x''+p(a+t^2)x'+q(a+t^2)x
=0 \quad (t\in (-\infty,+\infty)).
\end{equation}
Now, the disconjugacy of equation (\ref{ord2.12}) on $\mathbb R$ is equivalent to the disconjugacy of equation (\ref{ord2.11}) on $(a,+\infty).$ Hence, by applying the criteria for disconjugacy for equation (\ref{ord2.12}) derived in the previous sections, we obtain the respective criteria for disconjugacy of equation (\ref{ord2.11}) on $(a,+\infty).$

\vspace*{2mm}

\textbf{3.~}We now apply Theorem \ref{osnov} to the problem of existence of periodic solutions of equation (\ref{ord2.9}) (cf. \cite{kt68}--\cite{kt74}) 

The absence of a non-trivial periodic solution of a linear homogeneous equation of second order is equivalent to the existence of unique solution to periodic boundary value problem
\begin{gather}\label{ex1}
(Lx)(t):= x''+p(t)x'+q(t)x=f(t),
\end{gather}
 $(p(t+T)=p(t),\;q(t+T)=q(t),\;f(t+T)=f(t),\;T>0),$
\begin{gather}\label{ex2}
 x(a)=x(a+T),\quad x'(a)=x'(a+T)
\end{gather}
for all $a\in \mathbb R$ and any right-hand side $f.$

N.N.~Yuberev was the first who observed the relationship between the existence of unique solution to the boundary value problem
(\ref{ex1}),\,(\ref{ex2}), and the disconjugacy of equation $Lx=0$ on interval $[a,a+T]$ for any
$a\in \mathbb R$ (see \cite{juber,juber68}).
The next theorem can be easily derived from \cite{juber,juber68}.
We give a new proof of this result.

\begin{theorem}\label{teor-1}
{\it Let $q(t)\not\equiv 0,\;q(t)\geqslant 0\quad \bigl(q(t)\leqslant 0\bigr),\;t\in \mathbb R,$. Suppose that functions $p,q$ are integrable and $T$-periodic, and equation
$Lx=0$ is disconjugate on $\mathbb R.$ Then the boundary value problem $ (\ref{ex1}),\,(\ref{ex2})$ is uniquely solvable for any right-hand side $f$ or, equivalently, the homogeneous equation $Lx=0$ does not have non-trivial $T$-periodic solutions.}
\end{theorem}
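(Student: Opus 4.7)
By the Fredholm alternative for linear second order ODE with $T$-periodic coefficients, unique solvability of the boundary value problem $(\ref{ex1}),(\ref{ex2})$ for every right-hand side $f$ is equivalent to the absence of non-trivial $T$-periodic solutions of the homogeneous equation $Lx=0$. Hence the whole content of the theorem reduces to proving that, under the stated hypotheses, $Lx=0$ admits no such solution. Suppose for contradiction that $x\not\equiv 0$ is a $T$-periodic solution. Since $L\in \gT(\mathbb R)$, a non-trivial solution has at most one zero on the whole line; but if $x(t_0)=0$, then $x(t_0+T)=0$ by periodicity, giving two zeros --- a contradiction. Hence $x$ is sign-definite on $\mathbb R$, and we may assume $x(t)>0$ everywhere.

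To rule out a positive periodic solution I use the standard integrating factor $P(t):=\int_0^t p(s)\,ds$ and rewrite $Lx=0$ as
$$\bigl(e^{P(t)}x'(t)\bigr)' \;=\; -e^{P(t)}\,q(t)\,x(t)\quad\text{a.e.\ on }\mathbb R.$$
Integrating over an arbitrary period $[a,a+T]$ and using the $T$-periodicity of $x'$ together with the identity $\int_a^{a+T}p=P(T)$, I obtain
$$\bigl(e^{P(T)}-1\bigr)\,x'(a) \;=\; -\int_a^{a+T} e^{P(t)-P(a)}\,q(t)\,x(t)\,dt, \qquad a\in\mathbb R.$$
Under the hypothesis $q\geqslant 0$, $q\not\equiv 0$, the right-hand side is strictly negative for every $a$. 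A short trichotomy on $P(T)$ closes the case: if $P(T)=0$, the left-hand side vanishes while the right-hand side is negative, a contradiction; if $P(T)>0$, then $x'(a)<0$ for all $a\in\mathbb R$, and if $P(T)<0$, then $x'(a)>0$ for all $a\in\mathbb R$. The last two possibilities both contradict $\int_0^T x'(a)\,da = x(T)-x(0)=0$.

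The case $q\leqslant 0$, $q\not\equiv 0$ is completely symmetric: only the overall sign of the right-hand side of the displayed identity reverses, and the same three possibilities on $\mathrm{sign}\,P(T)$ each yield a contradiction. All the manipulations are justified under the mere integrability of $p$ and $q$, since $x'$ is absolutely continuous (as the indefinite integral of $-px'-qx$, which lies in $L^1_{\mathrm{loc}}$) and so is $e^{P}$; therefore $e^{P}x'$ is absolutely continuous and the fundamental theorem of calculus applies. I do not foresee serious obstacles: the conceptual heart is that disconjugacy together with periodicity immediately kills every solution having a zero, reducing the problem to the positive-periodic case, which is then excluded by the sign count built into the integrating-factor identity above.
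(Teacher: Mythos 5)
Your proof is correct, and it takes a genuinely different route from the paper after the common first step. Both arguments begin identically: disconjugacy on $\mathbb R$ plus $T$-periodicity forces any non-trivial periodic solution to be zero-free, hence (WLOG) positive. From there the paper subtracts the global minimum value $m$ of the positive periodic solution $u$, observes that $z:=u-m\geqslant 0$ satisfies $Lz=-mq$ with $z(t_k)=0$ at the minima, and represents $z$ via the Green's function of the Dirichlet problem on $[t_1,t_n]$; the sign condition $G_n(t,s)<0$ (which again relies on disconjugacy) then makes $z$ strictly of one sign in the interior, contradicting the interior zeros. You instead pass to the divergence form $\bigl(e^{P}x'\bigr)'=-e^{P}qx$, integrate over one period, and use periodicity of $x'$ and of $p$ to obtain $\bigl(e^{P(T)}-1\bigr)x'(a)=-\int_a^{a+T}e^{P(t)-P(a)}q(t)x(t)\,dt$, after which the trichotomy on $\mathrm{sign}\,P(T)$ combined with $\int_0^T x'=0$ finishes the argument. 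Your version is more elementary: it invokes disconjugacy only once (to get sign-definiteness) and replaces the Green's-function sign machinery by a direct integration, at the modest cost of the explicit integrating-factor computation; the paper's version, by contrast, showcases the negativity of the Green's function under disconjugacy, which is one of the themes of the paper. All the regularity claims you make (absolute continuity of $x'$ and of $e^{P}x'$, validity of the fundamental theorem of calculus, strict negativity of the integral when $q\geqslant 0$, $q\not\equiv 0$) hold under the stated integrability hypotheses, so I see no gap.
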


We note that the conditions of Theorem \ref{teor-1} are more restrictive than the conditions of Yuberev-Tonkov-Hohryakov theorem, since the disconjugacy of the second order linear differential equation on $\mathbb R$ implies its disconjugacy on any interval $[a,a+T],$ while the converse is not true.
However, the conditions of Theorem \ref{teor-1} can be verified easily, which justifies these restrictions.

We note also that the condition of the constant sign for the coefficient
 $q$ can not be omitted. 
For instance, equation (\ref{ord2.11}) having $2\pi$-periodic coefficients is disconjugate on $\mathbb R$ (the latter follows from Theorem \ref{VP'} if we put 
$v(t)\equiv 2+\sin\,t>0\;(t\in\mathbb R)$), however it has a  $2\pi$-periodic solution $u(t)\equiv v(t)\equiv 2+\sin\,t.$

\begin{proof}[Proof of Theorem \ref{teor-1}]
Let us suppose that equation $Lx=0$ has a $T$-periodic solution $u.$ 
Since this solution can not have zeros due to the disconjugacy of the latter equation, we may assume without loss of generality that
$u(t)>0\;(t\in\mathbb R).$ Let $\{t_k\}_{k=1}^\infty,\;0<t_1<t_2<\ldots$ be the sequence of points of global minima 
of function $u$, let $m:= u(t_k)>0.$ 
We note that on each interval of length $T$ there can be only finitely many of such points, as follows from the finiteness of the total variation of function $u$ being a continuously differentiable function.
We define $z(t):= u(t)-m.$ Then
\begin{gather}\label{ex5}
z(t)\geqslant 0\quad (t\in\mathbb R),\quad z(t_k)=0,\quad k=1,2,\dots
\end{gather}
$(Lz)(t)=(Lu)(t)-mq(t))=-mq(t)$ and function $z,$ being a solution of  boundary value problem
\begin{gather}\label{ex6}
(Lz)(t)=-mq(t),\quad t\in [t_1,t_n],\quad u(t_1)=0,\;u(t_n)=0
\end{gather}
has presentation (we assume that $n>1$ is sufficiently large, so that between each two points $t_1$ and $t_n$ on distinct period intervals there would exist at least one point $t_k\in (t_1,t_n)$) 
$$
z(t)=m\int\limits_{t_1}^{t_n}\Bigl(-G_n(t,s)\Bigr)q(s)\,ds>0\quad (<0)\quad (t\in (t_1,t_n)),
$$
where $G_n$ is the Green function of problem (\ref{ex6}). As is well known (see, e.g., \cite{lev69})  $G_n(t,s)<0$ for $(t,s)\in (t_1,t_n)^2.$
The inequality $z(t)>0$ $(z(t)<0$) for $t\in (t_1,t_n)$ contradicts to (\ref{ex5}). This completes the proof.
\end{proof}

Theorems \ref{osnov} and \ref{teor-1} now give us the following result.

\begin{corollary}
{\it Suppose that $q(t)\not\equiv 0,\;q(t)\geqslant 0\quad \bigl(q(t)\leqslant 0\bigr),\;t\in \mathbb R$, where $p,q$ are locally integrable $T$-periodic functions,
and equation $Lx=0$ satisfies one of conditions $1)$ -- $6)$ of Theorem \ref{osnov}. Then the homogeneous equation $Lx=0$ does not have non-trivial $T$-periodic solutions.}
\end{corollary}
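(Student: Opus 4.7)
The plan is to combine the two preceding results directly: Theorem \ref{osnov} supplies disconjugacy of $Lx=0$ on $\mathbb{R}$ out of any of the six conditions, and Theorem \ref{teor-1} converts disconjugacy on $\mathbb{R}$ into absence of non-trivial $T$-periodic solutions, provided the sign and periodicity hypotheses hold. Since the corollary asserts exactly the input hypotheses of these two theorems stacked together, the argument is essentially a one-line composition.

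Concretely, the first step is to invoke Theorem \ref{osnov}: by assumption one of the conditions $1)$--$6)$ is satisfied, and each of these conditions was shown in Section 5 to imply $L\in\gT\bigl((-\infty,+\infty)\bigr)$. The second step is to feed this into Theorem \ref{teor-1}: the remaining hypotheses of that theorem, namely $q\not\equiv 0$, $q$ of constant sign, and $p,q$ locally integrable and $T$-periodic, are exactly what the corollary assumes. The conclusion of Theorem \ref{teor-1}, i.e. unique solvability of the periodic boundary value problem \eqref{ex1},\eqref{ex2}, is equivalent (as recorded in the paragraph preceding that theorem) to the non-existence of non-trivial $T$-periodic solutions of $Lx=0$, which is precisely the claim of the corollary.

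The only thing worth checking is a regularity compatibility issue: the corollary assumes local integrability of $p$, while conditions $4)$--$6)$ of Theorem \ref{osnov} implicitly assume $p$ differentiable. This is not a real obstacle, only a matter of reading the hypotheses consistently — whenever one of the latter conditions is invoked in the corollary, the additional regularity of $p$ is simply part of the standing assumption. With that caveat, there is no substantive difficulty; the corollary is a clean conjunction of Theorems \ref{osnov} and \ref{teor-1}, and no new estimate or construction is required.
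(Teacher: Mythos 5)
Your proposal is correct and is exactly the paper's argument: the corollary is stated immediately after the remark ``Theorems \ref{osnov} and \ref{teor-1} now give us the following result,'' i.e.\ the paper also treats it as a direct composition of the two theorems, with Theorem \ref{osnov} supplying disconjugacy on $\mathbb R$ and Theorem \ref{teor-1} converting that into the non-existence of non-trivial $T$-periodic solutions. Your remark about the extra differentiability of $p$ required by conditions $4)$--$6)$ is a reasonable reading of the hypotheses and does not change the argument.
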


\end{document}